\providecommand{\U}[1]{\protect \rule{.1in}{.1in}}
\numberwithin{equation}{section}
\newtheorem{Theorem}{Theorem}[section]
\newtheorem{lemma}[Theorem]{Lemma}
\newtheorem{remark}[Theorem]{Remark}
\newenvironment{proof}[1][Proof]{\noindent \textbf{#1.} }{\  \rule{0.5em}{0.5em}}
\begin{document}
\title{Extended mean field games with terminal constraint via decoupling fields}
\author{Tianjiao Hua \thanks{School of Mathematical Sciences, Shanghai Jiao Tong University, China (htj960127@sjtu.edu.cn)}
\and
Peng Luo \thanks{School of Mathematical Sciences, Shanghai Jiao Tong University, China (peng.luo@sjtu.edu.cn)}}
\maketitle
\begin{abstract}
We consider a class of extended mean field games with common noises, where there exists a strictly terminal constraint. We solve the problem by reducing it to an unconstrained control problem by adding a penalized term in the cost functional and then taking a limit. Using the stochastic maximum principle, we characterize the solution of the unconstrained control problem in terms of a conditional mean field forward–backward stochastic differential equation (FBSDE). We obtain the wellposedness results of the FBSDE and the monotonicity property of its decoupling field. Based on that, we solve the original constrained problem and characterize its solution in terms of a system of coupled conditional mean field FBSDE with a free backward part. In particular, we obtain the solvability of a new type of coupled conditional mean field FBSDEs.
\end{abstract}

\textbf{Key words}: extended mean field game, terminal constraint, forward-backward differential equation, conditional mean field FBSDE, decoupling field.

\textbf{MSC-classification}: 93E20; 60H30; 49N70
\section{Introduction}

Mean field games (MFGs) are a powerful tool
to analyze strategic interactions in large populations when each individual player has negligible influence on the behavior of other players. Introduced independently by
Huang, Malham\'e, and Caines \cite{huang2006} and Lasry and Lions \cite{Lions2007}, MFGs have received
considerable attention in the probability and stochastic control literature in the last decade. Carmona and Delarue introduced a probabilistic approach to investigate MFGs (see e.g. \cite{carmona2013siam,MFGbook1,MFGbook2}). When the mean field interaction is not only via the distribution of the states but also controls, this MFG system is called extended mean field game, also known as mean field game of controls.  Extended mean field games have been successfully applied to many practical problems, ranging from optimal liquidation problem under market impact to models of optimal exploitation of exhaustible resources (see e.g. \cite{MFGbook1,carmona2021probabilistic,alasseur2020extended,cardaliaguet2018mean}). 

Different from most control problems, there exist terminal constraints in optimal liquidation models. Motivated by this, in this paper, we introduce a class of linear-quadratic extended MFGs with common noises and terminal state constraint, i.e., $X_{T} = 0$, where the state coefficients and the cost functional vary with the mean field term in a nonlinear way. The terminal constraint implies the singular terminal condition of the value function. The singularity becomes a major challenge when determining the value function and applying verification arguments, even without mean field terms. There exist many studies to overcome the challenges resulting from the terminal state constraint. The majority of the literature  considers finite approximations of the singular terminal value and then shows that the minimal solution to the value function with a singular terminal condition can be obtained by a monotone convergence argument (see e.g.,  \cite{2014Ankirchner,2015Graewe,2016kruse,Popier2006,Popier2007}).  Graewe et al. \cite{2018horst} introduced an approach by determining the precise asymptotic behavior of a potential solution to the HJB equation at the terminal time, which was further generalized in Graewe and Horst \cite{2017Horst}.

The situation is more complicated for MFGs. Indeed, MFGs with a terminal state constraint have been considered before in the literature by means of so-called mean field (game) planning problems (MFGPs) introduced by Lions. In these problems, the terminal state constraint is given by a target density of the state at the terminal time. Different from the literature on MFGPs (see, e.g., Achdou et al. \cite{2012Achdou}, Gomes and Seneci \cite{2018Gomes}, Porretta \cite{2014porretta}), we consider an MFG with a strict terminal state constraint by applying a probabilistic approach. 

In order to solve the constrained control problem, we first study a related control problem without the constraint $X_{T} = 0$, but with an penalizing term $\frac{1}{2}LX_{T}^{2}$ in the cost functional. Making use of the results in \cite{hua2023linearquadratic}, solving the unconstrained optimal control problem is reduced to solve a conditional mean field FBSDE and obtain the optimal control. More importantly, we can establish the regularity of the decoupling field $u^{L}$ of the conditional mean field FBSDE. The decoupling field admits an additional structure and enables us to pass to the limit $u^{\infty}$ when the constant $L$ of the penalty function converges to infinity, which we further use for solving the extended MFGs with terminal constraint. Furthermore, based on the convergence of $u^{L}$, we deduce that the associated solution processes $(X^{L},Y^{L},Z^{L},Z^{0,L})$ converge to a quadruplet of processes $(X^{\infty},Y^{\infty},Z^{\infty},Z^{0,\infty})$. We demonstrate that $(X^{\infty},Y^{\infty},Z^{\infty},Z^{0,\infty})$ can be uniquely characterized as the solution to a conditional mean field FBSDE under some assumptions, where initial and terminal conditions are imposed to the forward equation.

Indeed, Ankirchner et al. \cite{2020Fromm} also used the decoupling field method to solve a control problem with terminal state constraint. In contrast, we extend it to an extended MFG with common noise and the corresponding Hamiltonian system is a conditional mean field FBSDE. As a consequence, the decoupling field of the conditional mean field FBSDE is a function of not only the spatial variable but also measure variable. Thus,  this has brought additional difficulties to our analysis of its structure and limit properties. To the best of our knowledge, our work is the first one to use the decoupling field to tackle extended MFGs with terminal state constraint. Moreover, the convergence result can be viewed as a consistency result for both the unconstrained and the constrained problems.

A MFG problem with terminal state constraint was also considered in \cite{FuHorstliquidation2021}. In comparison, we consider a more general form of controlled state dynamics and take the mean field term of states into account in the state dynamics and cost functional. Moreover, different from \cite{FuHorstliquidation2021}, we allow the state coefficients and the cost functional vary with the mean field term in a nonlinear way. Thus, our Hamiltonian system is not a linear conditional mean field FBSDE and then the approach of using affine ansatz is no longer applicable. More importantly, we remove the limitation of weak interaction condition by adding some monotonicity condition. It is worth to mention that in our setting, our control problem satisfies the criterion named "absorption", which means a player drops out of the market when her position hits zero.  MFGs with absorption have recently been considered in \cite{graewe2022maximum,fu2024mean}. In particular, this restriction is reasonable for exhaustible resources, while also to some extent can be viewed as a no statistical arbitrage condition as stated in \cite{fu2024mean}. 

The rest of our paper is organized as follows. In section 2, we formulate the extended MFG with common noise and terminal constraint, give the assumptions and the main result. In section 3, we introduce an unconstrained MFG, analyze the property of the decoupling field of its solution, and then solve the constrained MFG by taking a limit. Moreover, we establish a wellposedness result for a new type of conditional mean field FBSDEs.

\textbf{Notations and Conventions.}
 Let $(\Omega, \mathcal{F}, \mathbb{F}, \mathbb{P})$ be a complete filtered probability space which can support two independent d-dimensional Brownian motions: $W$ and $W^0$.
For any filtration $\mathbb{G}$, we introduce the following spaces: 
$\beta \in$ $L_{\mathbb{G}}^2([0, T] ; \mathbb{R}^{n})$ if $\beta: \Omega \times[0, T] \rightarrow \mathbb{R}^{n}$ is a $\mathbb{G}$-progressively measurable process such that $\mathbb{E}\left[\int_0^T\left|\beta_t\right|^2 d t\right]$ $<\infty$; $\alpha \in S^{2}_{\mathbb{G}}([0,T];\mathbb{R}^{n})$ if $\alpha: \Omega \times[0, T] \rightarrow \mathbb{R}^{n}$ is a $\mathbb{G}$-progressively measurable process such that
$\mathbb{E}\left[ \sup\limits _{0 \leq t \leq T}|\alpha_{t}|^{2}\right] < \infty$. For any $\sigma$-field $\mathcal{G}$, we denote $\xi \in L_{\mathcal{G}}^2$ if $\xi: \Omega \rightarrow \mathbb{R}$ is a $\mathcal{G}$-measurable random variable such that $\mathbb{E}\left[|\xi|^2\right]<\infty$; $\xi\in L^{\infty}_{\mathcal{G}}$, if $\xi: \Omega \rightarrow \mathbb{R}$ is a $\mathcal{G}$-measurable random variable such that there exists a nonnegative constant $C$ such that $ \mathbb{P}\left[|\xi|\geq C\right]=0$.

Unless otherwise stated, all equalities and inequalities between random variables and processes will be understood in the $\mathbb{P}$-a.s. and $\mathbb{P} \otimes d t$-a.e. sense, respectively. $|\cdot|$ denotes the Euclidean norm. $C^1(\mathbb{R}^{n};\mathbb{R}^k)$ denotes the space of all $\mathbb{R}^k$-valued and continuous functions $f$ on $\mathbb{R}^{n}$ with continuous derivatives.
\section{Setting and Main result}
First, we consider a class of linear-quadratic (LQ) $N$-player games of controls with terminal constraint. For a given $T>0$, let $(\widetilde{\Omega}, \tilde{\mathcal{F}}, \widetilde{\mathbb{F}}, \widetilde{\mathbb{P}})$ be a complete filtered probability space which can support $N+1$ independent d-dimensional Brownian motions: $W^i, 1 \leq i \leq N$, and $W^0$. Here $W^i$ denotes the idiosyncratic noises for the $i$-th player and $W^0$ denotes the common noise for all the players. Let $\widetilde{\mathbb{F}}:=\left\{\widetilde{\mathcal{F}}_t\right\}_{0 \leq t \leq T}$ where $\widetilde{\mathcal{F}}_t:=\left(\vee_{i=1}^N \mathcal{F}_t^{W^i}\right) \vee \mathcal{F}_t^{W^0} \vee \tilde{\mathcal{F}}_0$ and let $\widetilde{\mathbb{P}}$ have no atom in $\tilde{\mathcal{F}}_0$.

For $1 \leq i \leq N$, let $\xi^i \in L_{\widetilde{\mathcal{F}}_0}^2$ be independent and identically distribution (i.i.d.) random variables. Denote $\boldsymbol{x}:=\left(x^1, \ldots, x^N\right)$ and $\boldsymbol{\alpha}:=\left(\alpha^1, \ldots, \alpha^N\right)$. The dynamic of $i$-th player's state process $x^i \in \mathbb{R}$ is
$$
\left\{\begin{aligned}
d x_t^i & =\left[A_t x_t^i+B_t \alpha_t^i+f\left(t, \nu_{\boldsymbol{x}_t}^{N, i}\right)+b\left(t, \mu_{\boldsymbol{\alpha}_t}^{N, i}\right)\right] d t, \\
x_0^i & =\xi^i,
\end{aligned}\right.
$$
where the control process $\alpha^{i} \in \widetilde{\mathcal{A}}^{0}(0, T):=\left\{\alpha | \alpha \in L_{\widetilde{\mathbb{F}}}^2([0, T] ; \mathbb{R}) \text{ such that } x^{i}_{T}=0 \right\}$ and
$$
A_t, B_t:[0, T] \rightarrow \mathbb{R}, \quad f, b:[0, T] \times \mathbb{R} \rightarrow \mathbb{R} .
$$
The interactions among players are via the average of all other players' states and controls
$$
\mu_{\boldsymbol{\alpha}}^{N, i}:=\frac{1}{N-1} \sum_{j \neq i} \alpha^j, \quad \nu_{\boldsymbol{x}}^{N, i}:=\frac{1}{N-1} \sum_{j \neq i} x^j.
$$
The optimization problem of player $i=1, \ldots, N$ is to minimize the cost functional
\begin{equation*}
\mathcal{J}^i\left(\alpha^i, \boldsymbol{\alpha}^{-i}\right):=\frac{1}{2} \mathbb{E}\int_0^T\left[Q_t\left(x_t^i+l\left(t, \nu_{\boldsymbol{x}_t}^{N, i}\right)\right)^2+R_t\left(\alpha_t^i+h\left(t, \mu_{\boldsymbol{\alpha}_t}^{N, i}\right)\right)^2\right] dt, 
\end{equation*}
where $\boldsymbol{\alpha}^{-i}=\left(\alpha^1, \ldots, \alpha^{i-1}, \alpha^{i+1}, \ldots, \alpha^N\right)$ denotes a strategy profile of other players excluding the $i$-th player and
$$
Q_t, R_t:[0, T] \rightarrow \mathbb{R}, \quad l, h:[0, T] \times \mathbb{R} \rightarrow \mathbb{R}.
$$
The above $N$-player game problem is to find the Nash equilibrium $\alpha^*$, that is to find a strategy profile $\alpha^*=\left(\alpha^{*, 1}, \ldots, \alpha^{*, N}\right)$ where $\alpha^{*, i} \in \widetilde{\mathcal{A}}^{0}(0, T), 1 \leq i \leq N$, such that
$$
\mathcal{J}^i\left(\alpha^{*, i}, \boldsymbol{\alpha}^{*,-i}\right)=\inf _{\alpha^i \in \widetilde{\mathcal{A}}^{0}(0, T)} \mathcal{J}^i\left(\alpha^i, \boldsymbol{\alpha}^{*,-i}\right) .
$$

Solving for a Nash equilibrium of an N-player game is challenging when N is large due to
the curse of dimensionality. Here we consider cases with terminal constraints, which will be more complex. So we formally take the limit as $N\rightarrow \infty$ and consider the limit
problem instead. The N-player games  are left for future work.

Let $(\Omega, \mathcal{F}, \mathbb{F}, \mathbb{P})$ be a complete filtered probability space which can support two independent d-dimensional Brownian motions: $W$ and $W^0$. Here $W$ denotes the idiosyncratic noise and $W^0$ denotes the common noise. We let $\mathbb{F}:=\left\{\mathcal{F}_t\right\}_{t \in[0, T]}$, where $\mathcal{F}_t:=$ $\mathcal{F}_t^W \vee \mathcal{F}_t^{W^0} \vee \mathcal{F}_0$, and let $\mathbb{P}$ have no atom in $\mathcal{F}_0$. We denote $\mathbb{F}^0:=\{\mathcal{F}_t^{W^0}\}_{t \in[0, T]}$,  $\mathcal{A}^{0}(0, T):=\left\{\alpha | \alpha \in L_{\mathbb{F}}^2([0, T] ; \mathbb{R}) \text{ such that } X_{T} = 0\right\}$. 

Then the extended MFG problem with terminal constraint associated with the N-player game is defined as follows and the functions appearing are as provided in the N-player game.\\
 \textbf{Problem (C-MF)} Find an optimal control  $\alpha^{*} \in \mathcal{A}^{0}(0, T)$ for the stochastic control problem 
   \begin{equation*}
   \left\{\begin{aligned}
        &\alpha^{*} \in \underset{\alpha \in \mathcal{A}^{0}(0, T)}{\operatorname{argmin}} \mathcal{J}(\alpha | \mu,\nu):=\frac{1}{2} \mathbb{E}\int_0^T\Big[Q_{t}\left(X_t^{ \alpha}+l\left(t,\nu_t\right)\right)^2+R_{t}\Big(\alpha_t+h\left(t,\mu_t\right)\Big)^2\Big]dt;\\
  & X_{t}^{\alpha} = \xi +\int_{0}^{t}\left(A_{s}X_{s}^{\alpha}+B_{s}\alpha_{s}+f(s,\nu_{s})+b(s,\mu_{s})\right)ds,\quad \xi \in L^{2}_{\mathcal{F}_{0}},\quad \mu,\nu\in L^{2}_{\mathbb{F}^{0}}([0,T];\mathbb{R});\\
    &\mu_{t} = \mathbb{E}[\alpha_{t}^{*}|\mathcal{F}_{t}^{W^{0}}], \quad \nu_{t} = \mathbb{E}[X^{\alpha^{*}}_{t}|\mathcal{F}^{W^{0}}_{t}].
\end{aligned}\right.
\end{equation*} 

We now state the main assumptions on the coefficients.\\
\textbf{Assumption (H).} Let $K$ be a positive constant.
 \begin{itemize}
    \item[(i)] The initial condition $\xi$ is nonnegative, and $\xi\in L^{\infty}_{\mathcal{F}_{0}}$.    \item[(ii)] The mappings $A_{t},B_{t},R_{t},Q_{t} :[0,T] \rightarrow \mathbb{R}$ are measurable  functions bounded by $K$. Moreover, $Q_{t}> 0$, $A_{t}\leq 0$, and there exists a positive constant $\delta$ such that for any $t\in[0,T]$, $R_{t}\geq \delta$, $|B_{t}|\geq \delta$.
\item[(iii)] The mappings $ f(t,x),\ b(t,x),\ l(t,x),\ h(t,x):[0,T]\times \mathbb{R}\rightarrow \mathbb{R}$ are measurable functions. We also assume that $h(t,0) =f(t,0) = b(t,0) = l(t,0) = 0$ for all $t\in [0,T]$.
\item[(iv)] $f(t,\cdot),\ b(t,\cdot),\ l(t,\cdot),\ h(t,\cdot) \in C^{1}(\mathbb{R};\mathbb{R})$ and their first derivatives are all bounded by $K$ for all $t\in[0,T]$.  Moreover, there exists positive constant $\varepsilon_{0}$ such that 
$$
\left|1+h^{\prime}(t,\cdot)\right| \geq \varepsilon_{0},
$$ 
and
\begin{equation} \label{(b4)}
f^{\prime}(t,\cdot) \leq 0, \quad l^{\prime}(t,\cdot)\geq 0,\quad
\frac{B_{t}R_{t}^{-1}(b^{\prime}(t,\cdot)-B_{t}h^{\prime}(t,\cdot))}{1+h^{\prime}(t,\cdot)} \geq 0.
 \end{equation}
\end{itemize} 

    We now state the main result on the solvability of problem (C-MF). The involved functions $\rho$ and $u^{\infty}$ will be given in the next section and $X^{\infty}$ is the controlled state process satisfied $X_{T}^{\infty} = 0$.
\begin{Theorem}\label{main result}
    Under assumption (H), the strategy $\alpha^{\infty}_{t}$  defined by
    \begin{equation*}
 \alpha_{t}^{\infty}:= -B_{t}R_{t}^{-1}u^{\infty}(t,X_{t}^{\infty},\mathbb{E}[X_{t}^{\infty}|\mathcal{F}_{t}^{W^{0}}])-h(t,\rho(t,-R_{t}^{-1}B_{t}u^{\infty}(t,X^{\infty}_{t},\mathbb{E}[X_{t}^{\infty}|\mathcal{F}_{t}^{W^{0}}]))),~t\in [0,T), \alpha^{\infty}_{T} := 0.
\end{equation*}
is an optimal control  for problem (C-MF) and satisfies $  
 \mathcal{J}\left(\alpha^{\infty}\right)<\infty$.
\end{Theorem}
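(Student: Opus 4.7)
The plan is to obtain $\alpha^{\infty}$ as a limit of optimal controls $\alpha^{L}$ for an auxiliary sequence of penalized unconstrained problems (U-MF-$L$), and then to verify by a standard penalization argument that the limit is both admissible and optimal for (C-MF). For each $L>0$, I would drop the terminal constraint $X_{T}=0$ and add the penalty $\tfrac{1}{2}L X_{T}^{2}$ to $\mathcal{J}$. Invoking the stochastic maximum principle together with the results of \cite{hua2023linearquadratic} referenced in the introduction, the optimizer $\alpha^{L}$ is characterized via a conditional mean field FBSDE with decoupling field $u^{L}(t,x,m)$. Under Assumption (H), the monotonicity conditions $f'\le 0$, $l'\ge 0$ and the last inequality in \eqref{(b4)} propagate into monotonicity of $u^{L}$ in the $x$-variable, which is the key structural property for the subsequent limit.

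Next I would study the dependence on $L$. Because increasing $L$ penalizes $X_{T}$ more strongly, $u^{L}$ is monotone in $L$; combined with uniform-in-$L$ a priori bounds on $(X^{L},Y^{L},Z^{L},Z^{0,L})$ coming from the FBSDE, one can pass to the limit to obtain a decoupling field $u^{\infty}$ together with processes $(X^{\infty},Y^{\infty},Z^{\infty},Z^{0,\infty})$ solving a free-terminal conditional mean field FBSDE with $X_{T}^{\infty}=0$. The auxiliary function $\rho$ is then the solution of the algebraic fixed point equation produced by combining the pointwise optimality relation $\alpha_{t}^{*}=-B_{t}R_{t}^{-1}Y_{t}-h(t,\mu_{t})$ with the McKean--Vlasov-type consistency condition $\mu_{t}=\mathbb{E}[\alpha_{t}^{*}\mid\mathcal{F}_{t}^{W^{0}}]$; the invertibility that makes $\rho$ well-defined is precisely where the condition $|1+h'|\ge\varepsilon_{0}$ enters.

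Having $u^{\infty}$ and $\rho$, the candidate $\alpha^{\infty}$ of the theorem is obtained by inserting them into the feedback relation. Plugging $\alpha^{\infty}$ back into the state dynamics gives a closed conditional McKean--Vlasov SDE whose solution I would identify with $X^{\infty}$; in particular $X_{T}^{\infty}=0$, so $\alpha^{\infty}\in\mathcal{A}^{0}(0,T)$. The finiteness of $\mathcal{J}(\alpha^{\infty})$ follows from the boundedness of the coefficients, Assumption (H)(iv), and the uniform $L^{2}$ estimates on $(X^{L},\alpha^{L})$ that survive in the limit.

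Finally, for optimality, let $\alpha\in\mathcal{A}^{0}(0,T)$ be any admissible control with associated state $X^{\alpha}$. Since $X_{T}^{\alpha}=0$, the penalty vanishes and $\mathcal{J}(\alpha)=\mathcal{J}^{L}(\alpha)$; by optimality of $\alpha^{L}$ for (U-MF-$L$), $\mathcal{J}^{L}(\alpha)\ge\mathcal{J}^{L}(\alpha^{L})\ge\mathcal{J}(\alpha^{L})$, the last step because the penalty is nonnegative. Letting $L\to\infty$ and using the convergence $\mathcal{J}(\alpha^{L})\to\mathcal{J}(\alpha^{\infty})$ yields $\mathcal{J}(\alpha)\ge\mathcal{J}(\alpha^{\infty})$. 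The main obstacle will be carrying out this last limit while simultaneously preserving the mean field consistency conditions on $\mu^{L},\nu^{L}$: one must show that the conditional laws given $\mathbb{F}^{0}$ converge strongly enough to pass to the limit in both the quadratic cost and the fixed point constraint, which is where the regularity of $u^{L}$ established in Section 3, together with the monotonicity property \eqref{(b4)}, does the heavy lifting.
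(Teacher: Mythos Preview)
Your proposal is correct and follows essentially the same route as the paper: penalize with $\tfrac{1}{2}LX_T^2$, solve (P-MF) via the conditional mean field FBSDE and its decoupling field $u^L$, prove monotonicity of $u^L$ in $L$ together with $L$-independent bounds on $[0,T)$, pass to the limit $u^\infty$, identify $X^\infty$ with $X_T^\infty=0$, and conclude optimality by comparing costs.

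One technical point is worth flagging. In your final optimality chain you invoke ``the convergence $\mathcal{J}(\alpha^{L})\to\mathcal{J}(\alpha^{\infty})$''. The paper does \emph{not} establish this full convergence; it only uses Fatou's lemma to get $\liminf_{L\to\infty}\mathcal{J}(\alpha^{L})\ge\mathcal{J}(\alpha^{\infty})$, which is all that is needed once you have $\mathcal{J}(\alpha)\ge\mathcal{J}(\alpha^{L})$ for every $L$. Proving actual $L^1$ convergence of the running cost would require uniform integrability of $|\alpha^{L}|^2$ near $t=T$, which is delicate because $u^{L}(t,\cdot,\cdot)$ only satisfies the singular bound $\lesssim 1/(T-t)$; Fatou sidesteps this entirely. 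So when you carry out the details, replace the claimed convergence by the Fatou inequality and the argument closes exactly as in the paper.
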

\section{Solving problem (C-MF) via a penalization method}
In this section, in order to solve problem (C-MF), we use a penalization method. In detail, in subsection \ref{3.1}, we introduce an extended MFG problem without the constraint $X_{T}=0$, but with an additional term $\frac{1}{2}X_{T}^{2}$ in the cost functional penalizing the deviation of $X_{T}$ from zero. We obtain the optimal strategy of the unconstrained MFGs by solving the corresponding conditional mean field FBSDE. Then in subsection \ref{3.2}, we analyze the monotonicity property of the decoupling field of the conditional mean field FBSDE and then solve problem (C-MF) by letting $L\rightarrow \infty$. Finally, in subsection \ref{3.3}, we establish the existence of the solution of a new type of coupled conditional mean field FSDEs, where initial and terminal conditions are imposed on the forward equation.
\subsection{Unconstrained problem}\label{3.1}
In this subsection, we consider the following extended MFG problem with $L>0$:\\
   \textbf{Problem (P-MF) } Find an optimal control $\alpha^{L} \in \mathcal{A}(0, T):=\left\{\alpha | \alpha \in L_{\mathbb{F}}^2([0, T] ; \mathbb{R}) \right\}$ for the stochastic control problem 
   \begin{equation*}
   \left\{\begin{aligned}
        &\alpha^{L} \in \underset{\alpha \in \mathcal{A}(0, T)}{\operatorname{argmin}} \mathcal{J}^{L}(\alpha | \mu,\nu):=\frac{1}{2} \mathbb{E}\left\{ \int_0^T\Big[Q_{t}\left(X_t^{ \alpha}+l\left(t,\nu_t\right)\right)^2+R_{t}\left(\alpha_t+h\left(t,\mu_t\right)\right)^2\Big]dt+L(X_{T}^{\alpha})^{2}\right\};\\
  & X_{t}^{\alpha} = \xi +\int_{0}^{t}\left(A_{s}X_{s}^{\alpha}+B_{s}\alpha_{s}+f(s,\nu_{s})+b(s,\mu_{s})\right)ds,\quad \xi \in L^{2}_{\mathcal{F}_{0}},\quad \mu,\nu\in L^{2}_{\mathbb{F}^{0}}([0,T];\mathbb{R});\\
    &\mu_{t} = \mathbb{E}[\alpha_{t}^{L}|\mathcal{F}_{t}^{W^{0}}], \quad \nu_{t} = \mathbb{E}[X^{\alpha^{L}}_{t}|\mathcal{F}^{W^{0}}_{t}].
\end{aligned}\right.
\end{equation*} 

The problem (P-MF) has been investigated in \cite{hua2023linearquadratic} using the stochastic maximum principle. Here, we will directly list the main results, and the detailed proof can be found in \cite{hua2023linearquadratic}. Under assumption (H), for any $L>0$, the corresponding optimal control process is given by
\begin{equation*}
\alpha_t^{L}=-R_{t}^{-1} B_{t} Y_t^{L}-h\left(t,\rho(t,-R_{t}^{-1} B_{t} \mathbb{E}[Y_{t}^{L}|\mathcal{F}_{t}^{W^{0}}])\right),\label{optimal control}
\end{equation*}
where we use the inverse function theorem to drive the existence of a uniformly Lipschitz continuous function satisfying that
\begin{equation*}
\rho^{\prime}(t,\cdot) = \frac{1}{1+h^{\prime}(t,\cdot)}, \quad 
\left|\rho^{\prime}(t,\cdot)\right|=\left|\frac{1}{1+h^{\prime}(t,\cdot)}\right| \leq \frac{1}{\varepsilon_0},\quad t\in[0,T],
\end{equation*}
and $(X_{t}^{L},Y_{t}^{L})$ are the first two components of the unique solution of the following conditional mean field FBSDE:
\begin{equation}\label{conditional mfFBSDE}
\left\{\begin{aligned}
d X_t^{L} &=  {\left[A_t X_t^{L}-B_t^2 R_t^{-1} Y_t^{L}-B_t h\left(t, \rho(t,-R_t^{-1} B_t \mathbb{E}[Y_t^{L} |\mathcal{F}_t^{W^{0}}])\right)\right.} \\
& \left.\quad +f\left(t, \mathbb{E}[X_t^{L} |\mathcal{F}_t^{W^{0}}]\right)+b\left(t, \rho(t,-R_t^{-1} B_t \mathbb{E}[Y_t^{L}|\mathcal{F}_t^{W^{0}}])\right)\right] d t, \\
d Y_t^{L} &=  -\left[A_t Y_t^{L}+Q_tX_t^{L}+Q_t l(t, \mathbb{E}[X_t^{L} |\mathcal{F}_t^{W^{0}}])\right] d t+Z_t^{L} d W_t+Z_t^{0,L} d W_t^0, \\
X_0^{L} &= \xi, \quad Y_T^{L}=LX_T^{L}.
\end{aligned}\right.
\end{equation}
Moreover, for the convenience of subsequent analysis, we introduce the processes $P_{t}^{L},\nu_{t}^{L},\varphi_{t}^{L},\Gamma^{L}_{t}$, and $P_t^{L}$ satisfies the following Riccati equation
\begin{equation}\label{p equation}
\left\{
\begin{aligned}
    dP_{t}^{L}&= B^{2}_{t}R_{t}^{-1}(P^{L}_{t})^{2}-2A_{t}P^{L}_{t}-Q_{t},\\
    P_{T}^{L} &= L,
    \end{aligned}\right.
\end{equation}
and $(\nu_{t}^{L},\varphi_{t}^{L},\Gamma^{L}_{t})$ satisfy the following FBSDE
\begin{equation}\label{expectation FBSDE}
\left\{\begin{aligned}
d \nu_t^{L} & = {\left[\left(A_{t}-B_{t}^{2}R_{t}^{-1}P_{t}^{L}\right) \nu_t^{L}-B_{t}^2 R_{t}^{-1} \varphi_t^{L}-B_{t} h\left(t,\rho\left(t,-R_{t}^{-1} B\left(P_t^{L} \nu_t^{L}+\varphi_t^{L}\right)\right)\right)\right.} \\
& \quad\left.+f\left(t,\nu_t^{L}\right)+b\left(t,\rho\left(t,-R_{t}^{-1} B_{t}\left(P_t^{L} \nu_t^{L}+\varphi_t^{L}\right)\right)\right)\right] d t\\
d \varphi_t^{L} & =  -\left[\left(A_{t}-B_{t}^{2}R_{t}^{-1}P_{t}^{L}\right) \varphi_t^{L}+P_t^{L} f\left(t,\nu_t^{L}\right)+P_t^{L} b\left(t,\rho\left(t,-R^{-1}_{t} B_{t}\left(P_t^{L} \nu_t^{L}+\varphi_t^{L}\right)\right)\right)\right. \\
& \quad\left.+Q_{t} l\left(t,\nu_t^{L}\right)-P_t B_{t} h\left(t,\rho\left(t,-R_{t}^{-1} B_{t}\left(P_t \nu_t^{L}+\varphi_t^{L}\right)\right)\right)\right] d t+\Gamma_t^{L} d W_t^0, \\
\nu_0^{L} &=  \mathbb{E}[\xi], \quad \varphi_T^{L}=L\nu_T^{L}.
\end{aligned}\right.
\end{equation}

Now, we give the following theorem to summarize the wellposedness results for the above equations and give the optimal control of problem (P-MF). The detailed proof can be found in \cite{hua2023linearquadratic}.
\begin{Theorem}\label{LQ}
    Suppose assumption (H) holds.   
    \begin{itemize}
        \item [(i)] The Riccati equation \eqref{p equation}  admits a unique solution $P_{t}^{L}$ on $[0,T]$ such that for any $t\in[0,T]$,
 \begin{equation*}
    0 < P_{t}^{L} \leq C,
 \end{equation*}
 where $C$ is a constant depending only on $K,T,L$. 
 \item[(ii)] The FBSDE \eqref{expectation FBSDE} admits a unique solution $(\nu^{L},\varphi^{L},\Gamma^{L})$. Moreover, define the function $\Phi^{L}:[0,T]\times\mathbb{R}\rightarrow \mathbb{R}$ via
\begin{equation*}
    \Phi^{L}(t,\nu) = \varphi_{t}^{t,\nu},\label{phi decoupling}
\end{equation*}
where $\varphi^{t,\nu}_{t}$ is the second component of the unique solution of FBSDE \eqref{expectation FBSDE} with initial condition $(t,\nu)\in[0,T]\times \mathbb{R}$. Then it satisfies that
\begin{equation*}
    \left|\Phi^{L}(t,\nu_{1})-\Phi^{L}(t,\nu_{2})\right| \leq C\left|\nu_{1}-\nu_{2}\right|,\quad  \forall \nu_{1},\nu_{2} \in \mathbb{R},\quad \forall t\in[0,T],\label{phi uniform}
\end{equation*}
where C only depending on $K,T,L$. 
 \item[(iii)] The conditional mean field FBSDE \eqref{conditional mfFBSDE} admits a unique solution $(X^{L},Y^{L},Z^{L},Z^{0,L})$.
Moreover, the function $u^{L}:[0,T]\times\mathbb{R}\times \mathbb{R}\rightarrow \mathbb{R}$ defined by 
\begin{equation}\label{u_p_phi}
   u^{L}(t,x,\nu) :=  P^{L}_{t} x + \Phi^{L}(t,\nu)
\end{equation}
is the decoupling field of conditional mean field FBSDE \eqref{conditional mfFBSDE}.
\item [(iv)] The process $\alpha^{L}$ defined by
\begin{equation*}
    \alpha_{t}^{L}:=-R_{t}^{-1} B_{t} Y_t^{L}-h\left(t,\rho(t,-R_{t}^{-1} B_{t} \mathbb{E}[Y_{t}^{L}|\mathcal{F}_{t}^{W^{0}}])\right)
\end{equation*}
is the optimal control of problem (P-MF).
\end{itemize}
\end{Theorem}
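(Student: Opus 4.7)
The plan is to exploit the penalization inequality: since any $\alpha \in \mathcal{A}^0(0,T)$ forces $X_T^\alpha = 0$, the penalty term vanishes and $\mathcal{J}^L(\alpha) = \mathcal{J}(\alpha)$, so the optimality of $\alpha^L$ for problem (P-MF) immediately gives a uniform comparison between $\mathcal{J}(\alpha^L)$ and $\mathcal{J}(\alpha)$. I would then pass to the limit $L\to\infty$ using the convergence of the decoupling fields $u^L\to u^\infty$ and of the solution processes $(X^L,Y^L,Z^L,Z^{0,L})\to(X^\infty,Y^\infty,Z^\infty,Z^{0,\infty})$ established in Section~3, and identify $\alpha^\infty$ as the candidate optimizer.

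In more detail, first I would verify that $\mathcal{A}^0(0,T)$ is non-empty; a natural witness is furnished by $\alpha^\infty$ itself once the forward SDE driven by $u^\infty$ is shown to have $X_T^\infty=0$, but to run the comparison argument cleanly I prefer to fix an explicit bounded admissible strategy $\tilde\alpha$ with $X_T^{\tilde\alpha}=0$ and $\mathcal{J}(\tilde\alpha)<\infty$ (for instance a deterministic liquidation schedule adjusted by an $\mathcal{F}_T$-measurable terminal correction, using $|B_t|\geq\delta$ from assumption (H)). Then for every $L>0$, by Theorem~\ref{LQ}(iv),
\[
\mathcal{J}(\alpha^L) + \tfrac{L}{2}\mathbb{E}\bigl[(X_T^L)^2\bigr] \;=\; \mathcal{J}^L(\alpha^L) \;\leq\; \mathcal{J}^L(\tilde\alpha) \;=\; \mathcal{J}(\tilde\alpha),
\]
which gives the twin bounds $\mathcal{J}(\alpha^L)\leq\mathcal{J}(\tilde\alpha)$ and $\mathbb{E}[(X_T^L)^2]\leq 2\mathcal{J}(\tilde\alpha)/L$, both uniform in $L$. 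In particular $\{\alpha^L\}_L$ is bounded in $L^2_{\mathbb{F}}$.

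Next I would use the representation $\alpha^L_t = -R_t^{-1}B_t u^L(t,X^L_t,\mathbb{E}[X^L_t|\mathcal{F}^{W^0}_t]) - h(t,\rho(t,-R_t^{-1}B_t\mathbb{E}[Y^L_t|\mathcal{F}^{W^0}_t]))$ together with the convergence $u^L\to u^\infty$ and the stability of the forward equation to deduce convergence of $\alpha^L\to\alpha^\infty$ in $L^2_{\mathbb{F}}$ and of $X^L\to X^\infty$ in $S^2_{\mathbb{F}}$. The terminal bound $\mathbb{E}[(X_T^L)^2]\to 0$ combined with the pathwise convergence $X^L_T\to X^\infty_T$ yields $X_T^\infty = 0$ a.s., so $\alpha^\infty\in\mathcal{A}^0(0,T)$, and the uniform estimate on $\mathcal{J}(\alpha^L)$ coupled with Fatou's lemma (applied to the pointwise nonnegative integrand of $\mathcal{J}$) gives $\mathcal{J}(\alpha^\infty)\leq\liminf_L\mathcal{J}(\alpha^L)<\infty$.

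Finally, optimality follows by chaining the inequality above with an arbitrary competitor. For any $\alpha\in\mathcal{A}^0(0,T)$, $X_T^\alpha=0$ implies $\mathcal{J}^L(\alpha)=\mathcal{J}(\alpha)$, so $\mathcal{J}(\alpha^L)\leq\mathcal{J}^L(\alpha^L)\leq\mathcal{J}(\alpha)$, and passing to the liminf yields $\mathcal{J}(\alpha^\infty)\leq\mathcal{J}(\alpha)$. The hardest step will be the passage to the limit: rigorously showing $\alpha^L\to\alpha^\infty$ in $L^2_\mathbb{F}$ (not only weakly) requires the uniform Lipschitz control of $u^L$ in both the spatial and measure variables together with the $\mathbb{F}^0$-measurability of the conditional expectations — exactly the structure $u^L(t,x,\nu)=P^L_t x+\Phi^L(t,\nu)$ from \eqref{u_p_phi} and its monotonicity properties developed in Section~3 are designed to make this limit work despite the blow-up of the terminal datum $Y^L_T=LX^L_T$.
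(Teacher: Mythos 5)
Your proposal does not address the statement you were asked to prove. Theorem \ref{LQ} concerns the \emph{unconstrained penalized} problem (P-MF): it asserts (i) wellposedness and positivity of the Riccati equation \eqref{p equation}, (ii) wellposedness of the FBSDE \eqref{expectation FBSDE} and the uniform Lipschitz property of its decoupling field $\Phi^L$, (iii) wellposedness of the conditional mean field FBSDE \eqref{conditional mfFBSDE} together with the affine structure $u^L(t,x,\nu)=P^L_t x+\Phi^L(t,\nu)$ of its decoupling field, and (iv) that $\alpha^L$ is optimal for (P-MF). What you have sketched instead is the penalization-and-limit argument for the \emph{constrained} problem (C-MF), i.e.\ essentially the proof of Theorem \ref{main result} (and parts of Lemmas \ref{Xinfty} and \ref{xT=0}). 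Worse, your argument explicitly invokes ``Theorem \ref{LQ}(iv)'' in the key inequality $\mathcal{J}^L(\alpha^L)\leq\mathcal{J}^L(\tilde\alpha)$, so as a proof of Theorem \ref{LQ} it is circular: you assume the optimality of $\alpha^L$ for (P-MF), which is precisely item (iv) of the statement.

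A proof of the actual statement would need entirely different ingredients: for (i), existence and a priori bounds for the Riccati ODE via comparison with explicitly solvable sub- and super-solutions; for (ii) and (iii), a wellposedness argument for the coupled (conditional mean field) FBSDE system — e.g.\ small-time solvability plus extension of the decoupling field using the uniform bounds on $P^L$ and $\Phi^L_\nu$ — together with verification of the ansatz $Y^L_t=P^L_tX^L_t+\Phi^L(t,\mathbb{E}[X^L_t|\mathcal{F}^{W^0}_t])$; and for (iv), the stochastic maximum principle (a sufficient condition exploiting the convexity of the Hamiltonian under assumption (H)) to show that the candidate control derived from the adjoint process $Y^L$ is indeed optimal and that the fixed-point (consistency) condition $\mu_t=\mathbb{E}[\alpha^L_t|\mathcal{F}^{W^0}_t]$, $\nu_t=\mathbb{E}[X^{\alpha^L}_t|\mathcal{F}^{W^0}_t]$ holds. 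None of these steps appear in your proposal. (For what it is worth, the paper itself does not reprove Theorem \ref{LQ} either; it cites an earlier work for the detailed proof — but that does not rescue a proposal that proves a different theorem while assuming the one at issue.)
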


In the following lemma, we give a property of the optimal state trajectory of problem (P-MF). We show that when starting with a nonnegative initial position, it cannot be optimal to choose control $\alpha$ such that the controlled state process is increasing or negative at some time point.
\begin{lemma}\label{x trajectory}
 Under assumption (H), if $\alpha \in \mathcal{A}(0,T) $ solves the problem (P-MF), then 
\begin{equation*}
   X_s^\alpha=\xi+\int_0^s \left[A_{r} X_r^{\alpha}+B_{r} \alpha_r+f\left(r,\mathbb{E}[X^{\alpha}_{r}|\mathcal{F}_{r}^{W^{0}}]\right)+b\left(r,\mathbb{E}[\alpha_{r}|\mathcal{F}_{r}^{W^{0}}]\right)\right] dr,\quad s \in[0, T],  
 \end{equation*}
is non-increasing and nonnegative.
\end{lemma}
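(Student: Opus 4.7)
The plan is a perturbation argument against the optimality of $\alpha$. At the equilibrium, freezing $\mu_t:=\mathbb{E}[\alpha_t|\mathcal{F}_t^{W^0}]$ and $\nu_t:=\mathbb{E}[X^\alpha_t|\mathcal{F}_t^{W^0}]$, the control $\alpha$ is also optimal for the classical (non-McKean--Vlasov) stochastic LQ problem $\min_{\alpha'\in\mathcal{A}(0,T)}\mathcal{J}^L(\alpha'|\mu,\nu)$ with these frozen inputs. Hence any admissible perturbation $\tilde\alpha$ must satisfy $\mathcal{J}^L(\tilde\alpha|\mu,\nu)\ge\mathcal{J}^L(\alpha|\mu,\nu)$. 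I would construct perturbations that strictly lower this frozen cost whenever $X^\alpha$ fails nonnegativity or monotonicity, producing a contradiction.

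For nonnegativity, suppose on a set of positive probability $X^\alpha$ takes a negative value, and let $\tau:=\inf\{t:X^\alpha_t\le 0\}$. Since the state equation has no Brownian part, $X^\alpha$ is pathwise absolutely continuous, so $X^\alpha_\tau=0$ on $\{\tau<T\}$. Splice $\tilde\alpha_t:=\alpha_t\mathbf{1}_{[0,\tau)}(t)+\beta_t\mathbf{1}_{[\tau,T]}(t)$, where $\beta_t:=-B_t^{-1}[f(t,\nu_t)+b(t,\mu_t)]$ is the unique choice producing $\tilde X\equiv 0$ on $[\tau,T]$; admissibility follows from $|B_t|\ge\delta$ and the Lipschitz bounds in (H). The terminal penalty gain is $L(X^\alpha_T)^2\ge 0$, and the running state-cost change over $[\tau,T]$ is $-Q_t X^\alpha_t\bigl(X^\alpha_t+2l(t,\nu_t)\bigr)$; the remaining work is to control the running control-cost change $R_t[(\beta_t+h(t,\mu_t))^2-(\alpha_t+h(t,\mu_t))^2]$ and close the strict inequality, which should follow because $Q_t>0$ provides a quadratic gap while the boundedness of $R_t$ keeps the control-cost defect integrable.

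For monotonicity, having established nonnegativity (whence $\nu_t\ge 0$ and $l(t,\nu_t)\ge 0$ by $l(t,0)=0$ and $l'\ge 0$), I would run a parallel argument with the candidate $\tilde X_t:=\min_{s\le t} X^\alpha_s$, which is absolutely continuous, non-increasing, and lies in $[0,X^\alpha_t]$. Its recovered control $\tilde\alpha_t:=B_t^{-1}[\dot{\tilde X}_t-A_t\tilde X_t-f(t,\nu_t)-b(t,\mu_t)]$ coincides with $\alpha_t$ on $\{X^\alpha_t=\tilde X_t,\ \dot X^\alpha_t\le 0\}$ and differs only on the up-excursion set. The state-cost comparison uses $(\tilde X+l)^2\le(X^\alpha+l)^2$ (since $0\le\tilde X\le X^\alpha$ and $l\ge 0$), and the terminal penalty drops by $L[(X^\alpha_T)^2-\tilde X_T^2]\ge 0$.

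The main obstacle I anticipate is the control-cost bookkeeping. Although each $\tilde X$ is arranged to lower the state-cost term, the recovered $\tilde\alpha$ need not be smaller in magnitude than $\alpha$, and the sign of $R_t[(\tilde\alpha+h)^2-(\alpha+h)^2]$ must be pinned down. The structural assumption (H)(iv) --- in particular $|1+h'|\ge\varepsilon_0$, $A_t\le 0$, $f'\le 0$, together with the joint sign condition on $B_tR_t^{-1}(b'-B_th')/(1+h')$ --- is precisely tailored for this, and the argument likely requires integration by parts against the adjoint process $Y^L$ from Theorem \ref{LQ} to convert control-cost gaps into sign-definite state-cost gaps.
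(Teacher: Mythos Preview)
Your broad strategy---perturb the optimal control after a stopping time and contradict optimality---is the same as the paper's, but the execution diverges at the decisive step, and your version has a genuine gap.

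The paper does \emph{not} freeze $(\mu,\nu)$ and reverse-engineer a control from the state equation. Instead it sets the perturbed control to \emph{zero} after the stopping time and compares a cost functional in which the mean-field arguments are taken self-consistently (i.e.\ $\mathbb{E}[X^\beta_t|\mathcal{F}^{W^0}_t]$ and $\mathbb{E}[\beta_t|\mathcal{F}^{W^0}_t]$, not the frozen $\nu_t,\mu_t$). The point of Assumption~(H)(iii), namely $f(t,0)=b(t,0)=h(t,0)=l(t,0)=0$, is exactly this: with $\beta\equiv 0$ after $\sigma$, the state stays at zero, the mean-field inputs become zero, and therefore \emph{both} the running state cost and the running control cost vanish identically on $(\sigma,T]$. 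The comparison collapses to $Q_t(X^\alpha_t+l)^2+R_t(\alpha_t+h)^2\ge 0$, with strict inequality near $\sigma$; no control-cost bookkeeping, no adjoint, no sign analysis. For monotonicity the paper uses a ``switch to zero at the first upcrossing time $\rho$, then switch back to $\alpha$ at the reunion time $\tau$'' construction; on $(\rho,\tau]$ the zero-controlled state relaxes and stays in $[0,X^\alpha]$ thanks to $A_t\le 0$, $f'\le 0$, and the comparison uses only $l'\ge 0$.

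Your route fixes $(\mu,\nu)$ at their equilibrium values, which are generically nonzero, and takes $\beta_t=-B_t^{-1}[f(t,\nu_t)+b(t,\mu_t)]$. Then the post-$\tau$ running cost of the perturbation is $Q_t\,l(t,\nu_t)^2+R_t(\beta_t+h(t,\mu_t))^2>0$, and you must show it is dominated by the $\alpha$-cost. You do not: the state-cost difference you record, $-Q_tX^\alpha_t(X^\alpha_t+2l(t,\nu_t))$, has the wrong sign whenever $-2l<X^\alpha<0$, so even before touching the control cost the inequality is not clean; and for the control cost you concede the difficulty and only gesture at the adjoint $Y^L$ and the structural signs in (H)(iv). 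That is the gap. In fact the conditions you invoke there---$|1+h'|\ge\varepsilon_0$ and the joint sign condition on $B_tR_t^{-1}(b'-B_th')/(1+h')$---are not used by the paper in this lemma at all; they are consumed later in the decoupling-field bounds (Lemma~\ref{uniformly bounded}). The same obstacle recurs in your monotonicity step: the running-minimum state $\tilde X_t=\min_{s\le t}X^\alpha_s$ yields a recovered control $\tilde\alpha$ whose $R_t(\tilde\alpha+h)^2$ you cannot compare to $R_t(\alpha+h)^2$.

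In short, the missing idea is to perturb to the \emph{zero} control and exploit $f(t,0)=b(t,0)=h(t,0)=l(t,0)=0$, which makes the cost comparison immediate. Keeping $(\mu,\nu)$ frozen and nonzero manufactures exactly the control-cost obstacle you flag, and your proposal does not close it.
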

\begin{proof}
For any $\beta \in \mathcal{A}(0,T)$, we denote
$$
\mathcal{J}(\beta):= \frac{1}{2}\int_0^T\left[Q_{t}\left(X_t^{ \beta}+l(t,\mathbb{E}[X^{\beta}_{t}|\mathcal{F}_{t}^{W^{0}}])\right)^2+R_{t}\left(\beta_t+h\left(t,\mathbb{E}[\beta_{t}|\mathcal{F}_{t}^{W^{0}}]\right)\right)^2\right] d t+\frac{1}{2}L\left(X_T^{\beta}\right)^2,
$$
where $X^{\beta}$ satisfies
\begin{equation*}
    X_t^\beta=\xi+\int_0^t \left[A_{r} X_r^{\beta}+B_{r} \beta_r+f\left(r,\mathbb{E}[X^{\beta}_{r}|\mathcal{F}_{r}^{W^{0}}]\right)+b\left(r,\mathbb{E}[\beta_{r}|\mathcal{F}_{r}^{W^{0}}]\right)\right] dr,\quad t\in[0, T].
\end{equation*}
\textbf{Step 1:} Let $\alpha \in \mathcal{A}(0,T)$ be optimal, we show that $X^\alpha_{t} $ is nonnegative for any $t\in [0.T]$.  To this end, define the measurable set $E:=\left\{\inf _{s \in[0, T]} X_s^\alpha<0\right\}$ and the stopping time \footnote{Here and in the sequel, we use the convention $\inf \varnothing=\infty$.}
\begin{equation*}
\sigma:=\sigma^\alpha:=\inf \left\{s \in[0, T] | X_s^{\alpha}<0\right\} \text {. }
\end{equation*}
Note that $E=\{\sigma<T\}$ and that $\sigma$ is equal to $\infty$ on $E^c$. Furthermore, $X_\sigma^\alpha=0$ on $E$. Now, define $\beta$ as the strategy which coincides with $\alpha$ on $[0, \sigma(\omega) \wedge T]$, but is 0 on the interval $(\sigma(\omega) \wedge$ $T, T]$. Notice that $\alpha=\beta$ on $E^c$. Now let $\omega \in E$. Then $X_\sigma^\beta(\omega)=0$. Since $f(t,0)=b(t,0) = 0$, for all $t\in[0,T]$, we obtain $X_s^\beta(\omega)=0$  for all $s \in[\sigma(\omega), T]$, but $X_s^\beta(\omega)=X_s^\alpha(\omega)$ for $s \in[0, \sigma(\omega)]$. Furthermore, for $s \in[\sigma(\omega), T]$,
\begin{equation*}
\begin{aligned}
   &\quad Q_{s}\left(X^{\alpha}_{s}+l(s,\mathbb{E}[X^{\alpha}_{s}|\mathcal{F}_{s}^{W^{0}}]\right)^{2}(\omega)+R_{s}\left(\alpha_{s}+h(s,\mathbb{E}[\alpha_{s}|\mathcal{F}_{s}^{W^{0}}])\right)^{2}(\omega)\\
   & \quad -Q_{s}\left(X^{\beta}_{s}+l(s,\mathbb{E}[X^{\beta}_{s}|\mathcal{F}_{s}^{W^{0}}]\right)^{2}(\omega)-R_{s}\left(\beta_{s}+h(s,\mathbb{E}[\beta_{s}|\mathcal{F}_{s}^{W^{0}}])\right)^{2}(\omega)\\
   &=Q_{s}\left(X^{\alpha}_{s}+l(s,\mathbb{E}[X^{\alpha}_{s}])|\mathcal{F}_{s}^{W^{0}}\right)^{2}(\omega)+R_{s}\left(\alpha_{s}+h(s,\mathbb{E}[\alpha_{s}|\mathcal{F}_{s}^{W^{0}}])\right)^{2}(\omega)\\
   &\geq 0.
\end{aligned}
\end{equation*}
Note that for $s>\sigma(\omega)$ in a sufficiently small neighborhood of $\sigma(\omega)$, the left-hand side of the previous inequality is strictly larger than zero. Moreover,
\begin{equation*}
    L(X_T^{\alpha})^2(\omega) \geq L(X_T^{\beta})^2(\omega) = 0.
    \end{equation*}
Therefore, $\mathcal{J}(\alpha)(\omega)>\mathcal{J}(\beta)(\omega)$ for all $\omega \in E$, which due to the optimality of $\alpha$ can only mean $\mathbb{P}(E)=0$. In other words, $X^\alpha$ is nonnegative a.s.\\
\textbf{Step 2:} Let $\alpha \in \mathcal{A}(0,T)$ be optimal, we show that $X^\alpha_{t}$ does not have any points of increase on $[0, T]$. To this end, we define the stopping time
\begin{equation*}
\rho:=\rho^\alpha:=\inf \left\{s \in[0, T] | X_s^\alpha>\inf _{r \in[0, s]} X_r^\alpha\right\},
\end{equation*}
Let $\gamma$ be the strategy which equals $\alpha$ on $[0,\rho\wedge T]$, but equals zero on $(\rho \wedge T, T]$. Let us define another stopping time $\tau$
\begin{equation*}
    \tau:=\tau^{\alpha,\gamma} :=\inf \left\{s\in[0,T]| s>\rho \text{ and } X_{s}^{\alpha} = X_{s}^{\gamma}\right\}.
\end{equation*}
Note that $(\rho \wedge T, \tau \wedge T]$ is empty and $\alpha=\gamma$ on
\begin{equation*}
\widehat{E}:=\left\{\sup _{s \in[0, T]}\left(X_s^\alpha-\inf _{r \in[0, s]} X_r^\alpha\right) \leq 0\right\},
\end{equation*}
the event where $X^\alpha$ is non-increasing. Now let $\omega \in \widehat{E}^c$. Then $\rho(\omega)<T$ and $(\rho(\omega), \tau(\omega) \wedge T]$ is nonempty with $\gamma(\omega)$ vanishing on this interval. For $s\in (\rho(\omega), \tau(\omega) \wedge T]$, we have
\begin{equation*}
    dX^{\gamma}_{s}(\omega) = A_{s} X^{\gamma}_{s}(\omega)+f(s,\mathbb{E}[X^{\gamma}_{s}(\omega)|\mathcal{F}_{s}^{{W^{0}}}]) ds
\end{equation*}
From step 1, since $\alpha$ is the optimal control, we have $X^{\alpha}_{\rho}(\omega)\geq 0$ and then we have $ X^{\gamma}_{\rho}(\omega)=X^{\alpha}_{\rho}(\omega)\geq 0$. Under the conditions that for all $t\in[0,T]$, $A(t)\leq 0$, $f^{\prime}(t,0)\leq 0$ and $f(t,0) = 0$, we have that for all $s \in(\rho(\omega), \tau(\omega) \wedge T]$, $ 0 \leq X^{\gamma}_{s}(\omega)\leq X^{\alpha}_{s}(\omega) $. \\
Let us adjust strategy $\gamma$ to $\tilde{\gamma}$ such that $\tilde{\gamma}  = \gamma$ on $[0,\tau(\omega)\wedge T]$ and $\tilde{\gamma} = \alpha$ on $(\tau(\omega)\wedge T,T]$. Together with the  condition $l^{\prime}(s,\cdot) \geq 0$, we obtain for all $s \in(\rho(\omega), \tau(\omega) \wedge T]$ :
\begin{equation*}
\begin{aligned}
   & \quad Q_{s}\left(X^{\alpha}_{s}+l(s,\mathbb{E}[X^{\alpha}_{s}|\mathcal{F}_{s}^{W^{0}}]\right)^{2}(\omega)+R_{s}\left(s,\alpha_{s}+h(s,\mathbb{E}[\alpha_{s}|\mathcal{F}_{s}^{W^{0}}])\right)^{2}(\omega)\\
   & \quad-Q_{s}\left(X^{\tilde{\gamma}}_{s}+l(s,\mathbb{E}[X^{\tilde{\gamma}}_{s}|\mathcal{F}_{s}^{W^{0}}]\right)^{2}(\omega)-R_{s}\left(\tilde{\gamma}_{s}+h(s,\mathbb{E}[\tilde{\gamma}_{s}|\mathcal{F}_{s}^{W^{0}}])\right)^{2}(\omega)\\
   & \geq Q_{s}\left(X^{\alpha}_{s}+l(s,\mathbb{E}[X^{\alpha}_{s}|\mathcal{F}_{s}^{W^{0}}]\right)^{2}(\omega)-Q_{s}\left(X^{\tilde{\gamma}}_{s}+l(s,\mathbb{E}[X^{\tilde{\gamma}}_{s}|\mathcal{F}_{s}^{W^{0}}]\right)^{2}(\omega)\\
   &\geq 0,
\end{aligned}
\end{equation*}
 Note that for $s>\rho(\omega)$ in a sufficiently small neighborhood of $\rho(\omega)$ we have a strict inequality. Moreover, 
\begin{equation*}
    L\left(X_T^{ \alpha}\right)^2(\omega) \geq L(X_T^{ \tilde{\gamma}})^2(\omega),
    \end{equation*}
    and hence $\mathcal{J}(\alpha)(\omega) > \mathcal{J}(\tilde{\gamma})(\omega)$. Due to the optimality of $\alpha$, we have $\mathbb{P}(\hat{E}^{c}) = 0$. Therefore, the state process of an optimal strategy is non-increasing.
\end{proof}
\begin{remark}\label{y remark}
In theorem \ref{LQ}, we characterize the optimal control in terms of the conditional mean field FBSDE \eqref{conditional mfFBSDE}, and the first component of the solution $X^{L}_{t}$ is the optimal state trajectory. Combined with lemma \ref{x trajectory}, we can obtain that $X^{L}_{t}$ is nonnegative and non-increasing with respect to $t$. Moreover, by the BSDE comparison theorem, we obtain that $Y_{t}^{L}\geq 0$, which also indicates that, the decoupling field of FBSDE \eqref{conditional mfFBSDE} $u^{L}\geq 0$.
\end{remark}
\subsection{Taking the limit}\label{3.2}
The objective of this subsection is to show that the optimal control $\alpha^L$ of problem (P-MF) converges to an admissible strategy $\alpha^{\infty} \in \mathcal{A}^0\left(0, T\right)$ when $L \rightarrow \infty$, which provides an optimal strategy for problem (C-MF). This can be done by first proving the convergence of the decoupling field $u^L(t,x,\nu)$ of the conditional mean field FBSDE \eqref{conditional mfFBSDE} to some limit $u^{\infty}(t,x,\nu)$ and then showing the convergence of $X_{t}^L$ to some limit $X^{\infty}_{t}$ satisfying $X_{T}^{\infty} = 0$. Finally it leads us to the limit $\alpha^{\infty} \in \mathcal{A}^0\left(0,T\right)$. 

To prove the pointwise convergence of $u^{L}(t,x,\nu)$ with respect to $L$, we first prove the uniform boundedness property of function $u^{L}$.
\begin{lemma}\label{uniformly bounded}
Under assumption (H), there exist positive constants $C_{1},C_{2},C_{3}$ and $C_{4}$, satisfying $C_{3}<C_{1}$, which only depend on $K$, $\delta$, $T$  such that for all $L>0$, $t\in[0,T)$ and $x>0$, $\nu>0$, it holds that
\begin{equation}\label{u1}
    \frac{C_{1}}{\frac{1}{L}+(T-t)}=:\lambda^{L}_{t}\leq P^{L}_{t} \leq \kappa_{t}:=C_{2}\left(1+\frac{1}{T-t}\right),
\end{equation}
and
\begin{equation}\label{u1u2}
\frac{C_{3}}{\frac{1}{L}+(T-t)}=:\tilde{\lambda}^{L}_{t} \leq P_{t}^{L}+\Phi_{\nu}^{L}(t,\nu) \leq \tilde{\kappa}_{t}:=C_{4}\left(1+\frac{1}{T-t}\right).
\end{equation}
Moreover, we have that for $t\in[0,T)$, $L>0$, $x>0$ and $\nu>0$, $u^L(t, x,\nu)$ is uniformly bounded independently of $L$.
\end{lemma}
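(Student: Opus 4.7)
My plan is to establish the three claims—the bound \eqref{u1} on $P^L$, the bound \eqref{u1u2} on $P^L+\Phi^L_\nu$, and the uniform boundedness of $u^L$—by analyzing two Riccati-type ODEs of the same structural form and then combining the resulting estimates.

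For \eqref{u1}, I reverse time via $\tau := T-t$, so that $\tilde P(\tau) := P^L_{T-\tau}$ satisfies $d\tilde P/d\tau = -B_{T-\tau}^2 R_{T-\tau}^{-1}\tilde P^2 + 2A_{T-\tau}\tilde P + Q_{T-\tau}$ with $\tilde P(0)=L$. Using $A\leq 0$, positivity of $\tilde P$ from Theorem \ref{LQ}(i), and the bounds in \textbf{(H)}, I obtain $d\tilde P/d\tau \leq -c_0\tilde P^2 + K$ with $c_0 := \delta^2/K$. Comparing with the explicit supersolution $y(\tau) := \sqrt{K/c_0} + 2/(c_0\tau)$ (which satisfies $dy/d\tau \geq -c_0 y^2 + K$, since $-c_0\sqrt{K/c_0}^2+K=0$ absorbs the constant and the remaining cross terms are nonnegative) gives the upper bound in \eqref{u1}. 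For the lower bound I set $q := 1/\tilde P$, observe that $dq/d\tau = B^2R^{-1} - 2Aq - Qq^2 \leq K^2/\delta + 2Kq$, and apply Gronwall to obtain $q(\tau) \leq C(1/L + \tau)$ for a constant depending only on $K,\delta,T$, whence $P^L_t \geq C_1/(1/L + T - t)$.

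For \eqref{u1u2} I aim to show that $\hat P^L_t(\nu) := P^L_t + \Phi^L_\nu(t,\nu)$, evaluated along a solution of \eqref{expectation FBSDE}, satisfies a Riccati ODE of exactly the same shape as \eqref{p equation}, and therefore inherits bounds of the same form. Differentiating the decoupling identity $\varphi^{t,\nu}_s = \Phi^L(s,\nu^{t,\nu}_s)$ in the initial datum $\nu$, the sensitivities $(\delta\nu_s,\delta\varphi_s)$ satisfy a linear FBSDE with deterministic coefficients obtained by linearizing the drifts of \eqref{expectation FBSDE} in $(\nu,\varphi)$ (using $|\rho'|\leq \varepsilon_0^{-1}$), and one has $\delta\varphi_s = \Phi^L_\nu(s,\nu^{t,\nu}_s)\, \delta\nu_s$. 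Rearranging this linear FBSDE together with the Riccati equation \eqref{p equation} for $P^L_t$ produces, after cancellations, an ODE of the form
\begin{equation*}
\frac{d\hat P^L_t}{dt} = \hat G_t(\hat P^L_t)^2 - 2\hat A_t \hat P^L_t - \hat Q_t, \qquad \hat P^L_T = 2L,
\end{equation*}
with bounded coefficients. The monotonicity conditions in (H)(iv), namely $f'\leq 0$, $l'\geq 0$, and $B_tR_t^{-1}(b'-B_t h')/(1+h')\geq 0$, are precisely what forces $\hat G_t\geq c_0'>0$, $\hat A_t\leq 0$ and $\hat Q_t\geq 0$. Replaying the arguments of the previous paragraph on this ODE then yields \eqref{u1u2}.

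For the final assertion, by (H)(iii) and $\rho(t,0)=0$ the triple $(\nu,\varphi,\Gamma)\equiv 0$ solves \eqref{expectation FBSDE} with initial condition $\nu=0$, so $\Phi^L(t,0)=0$. Combining \eqref{u1}--\eqref{u1u2} gives $|\Phi^L_\nu(t,\nu)|\leq |P^L_t+\Phi^L_\nu(t,\nu)|+|P^L_t|\leq \tilde\kappa_t+\kappa_t$ independently of $L$, hence $|\Phi^L(t,\nu)|\leq (\tilde\kappa_t+\kappa_t)|\nu|$, and so $|u^L(t,x,\nu)|\leq \kappa_t x + (\tilde\kappa_t+\kappa_t)|\nu|$ uniformly in $L$ for each fixed $t<T$, $x>0$, $\nu>0$. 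The main obstacle is the third paragraph: identifying $\hat P^L = P^L+\Phi^L_\nu$ (rather than some other combination) as the natural quantity satisfying a Riccati ODE of the shape of \eqref{p equation}, and verifying that (H)(iv) is exactly the sign condition needed to guarantee the sign structure of $\hat G_t, \hat A_t, \hat Q_t$ under which the supersolution/subsolution comparison of Step 1 transfers verbatim.
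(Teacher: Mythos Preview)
Your proposal is correct and follows essentially the same three-step strategy as the paper: comparison arguments for the Riccati equation governing $P^L$, identification of $\Psi^L := P^L + \Phi^L_\nu$ as the spatial derivative of the decoupling field of the conditionally averaged FBSDE and derivation of a second Riccati-type equation for it, and finally integration of the derivative bounds using $\Phi^L(t,0)=0$. The only substantive slip is the terminal value $\hat P^L_T = 2L$: since $Y^L_T = LX^L_T$ and $P^L_T = L$, the ansatz $Y^L = P^L X^L + \varphi^L$ forces $\varphi^L_T = 0$ (the paper's displayed terminal $\varphi^L_T = L\nu^L_T$ in \eqref{expectation FBSDE} is a typo), so $\Phi^L_\nu(T,\cdot)=0$ and $\Psi^L_T = L$, which is what the paper's comparison functions $\hat\Psi,\bar\Psi$ actually use; this does not affect your argument beyond constants.
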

\begin{proof} The proof is splitted into three steps.\\
\textbf{Step 1:} Let us consider the bound of $P_{t}^{L}$, which satisfies the following Riccati equation
\begin{equation*}
\left\{
    \begin{aligned}
    dP_{t}^{L}&= B^{2}_{t}R_{t}^{-1}(P^{L}_{t})^{2}-2A_{t}P_{t}^{L}-Q_{t}\\
    P_{T}^{L} &= L.
    \end{aligned}\right.
\end{equation*}
We now define a generator $H:[0,T]\times \mathbb{R} \rightarrow \mathbb{R}$ via 
\begin{equation*}
    H(t,p) := -B^{2}_{t}R_{t}^{-1}p^{2}+2A_{t}p+Q_{t}
\end{equation*}
such that $dP_{t}^{L} = -H(t,P_{t}^{L})dt,\ t\in [0,T]$. According to Theorem \ref{LQ}, $P_{t}^{L}> 0$. Furthermore, we define a generator $\hat{H}:  \mathbb{R} \rightarrow \mathbb{R}$ via
\begin{equation*}
 \hat{H}(p) =   -\frac{K^{2}}{\delta}p^{2}-2Kp 
\end{equation*}
Then, we obtain that $\hat{H}(p) \leq H(t,p)$, for $t\in [0,T]$ and $p>0$.
Define the deterministic, bounded and non-negative process $\hat{P}: [0,T] \rightarrow [0,\infty)$ via
\begin{equation*}
   \hat{P}_{t} : =  \left(\frac{e^{2K(T-t)}}{L}+\frac{K^{2}}{\delta}(T-t) \frac{e^{2K(T-t)}-1}{2K(T-t)}\right)^{-1}, \quad t \in[0, T],
\end{equation*}
where an expression of the form $\frac{e^x-1}{x}$ is to be replaced by $1=\lim _{x \rightarrow 0} \frac{e^x-1}{x}$ in case $x$ is zero. Observe that $\hat{P}$ solves the ODE  $d\hat{P}_t=-\hat{H}(\hat{P}_t) d t,\ t \in[0, T]$, with terminal condition $\hat{P}_T= L$. The comparison principle implies $ \hat{P}_{t} \leq P_{t}^{L} $, which in turn implies $\lambda^L \leq P_{t}^{L}$ for an appropriately chosen $C_1$ depending on $T,K,\delta$. This proves the lower bound in \eqref{u1}. \\
\indent We now prove the upper bound in \eqref{u1}. To this end, define another locally Lipschitz generator $\bar{H}: \mathbb{R} \rightarrow \mathbb{R}$ via
\begin{equation*}
\bar{H}(p):=K_1-K_2 p^2, 
\end{equation*}
where $K_1:=K+\frac{2K^{3}}{\delta^{2}}$ and $K_{2}:=\frac{\delta^{2}}{2K}$. It holds that
\begin{equation*}
2Ap = 2 \left(\frac{\sqrt{2R}}{B}A\right)\left(\frac{B}{\sqrt{2R}}p\right) \leq 2RB^{-2}A^{2}+\frac{1}{2}B^{2}R^{-1}p^{2},
\end{equation*}
then we have
\begin{equation*}
\begin{aligned}
 H(t,p)  &= -B^{2}_{t}R_{t}^{-1}p^{2}+2A_{t}p+Q_{t}\\
  &\leq -\frac{1}{2}B^{2}_{t}R_{t}^{-1}p^{2}+2R_{t}B_{t}^{-2}A_{t}^{2} +Q_{t}   \\
  &\leq \frac{\delta^{2}}{2K}p^{2}+K+\frac{2K^{3}}{\delta^{2}}\\
  &=\bar{H}(p).
\end{aligned} 
\end{equation*}
 Now define the deterministic and bounded process $\bar{P}:[0, T] \rightarrow \mathbb{R}$ via
\begin{equation*}
\bar{P}_t=  \sqrt{\frac{K_1}{K_2}}\left(1+\frac{2}{\left(1+2(L \sqrt{\frac{K_2}{K_1}}-1)^{-1}\right) \exp \left(2 \sqrt{K_1 K_2}(T-t)\right)-1}\right), \quad t \in[0, T] .
\end{equation*}
Then $\bar{P}$ solves the ODE $d \bar{P}_t=-\bar{H}\left(\bar{P}_t\right) d t,\ t \in[0, T]$, with terminal condition $\bar{P}_T=L$. The comparison principle implies that $P_{t}^{L}\leq \bar{P}_{t}$. Further note that $\bar{P}$ is monotonically increasing in $L$ and converges to
\begin{equation*}
\sqrt{\frac{K_1}{K_2}}\left(1+\frac{2}{\exp \left(2 \sqrt{K_1 K_2}(T-t)\right)-1}\right)
\end{equation*}
for all $t \in[0, T]$ as $L \rightarrow \infty$. This implies for all $t \in[0, T]$ that
\begin{equation*}
P_{t} ^{L}\leq \sqrt{\frac{K_1}{K_2}}+\frac{1}{K_2(T-t)},
\end{equation*}
which is controlled from above by $\kappa_{t}$ with $C_2:=\sqrt{\frac{K_1}{K_2}}\vee \frac{1}{K_{2}}$. Thus we
have proven $P_{t}^{L} \leq \kappa_{t}$ and, therefore, the upper bound in \eqref{u1}.

\textbf{Step 2:} Consider the bound of $P_{t}^{L}+\Phi_{\nu}^{L}(t,\nu)$. Observe that $P_{t}^{L}\nu+\Phi^{L}(t,\nu)$ is the decoupling field of the following FBSDE
\begin{equation}\label{conditional FBSDE}
\left\{\begin{aligned}
d \mathbb{E}[X_{t}^{L}|\mathcal{F}_{t}^{W^{0}}] & =  {\left[A_t \mathbb{E}[X_{t}^{L}|\mathcal{F}_{t}^{W^{0}}]-B_t^2 R_t^{-1} \mathbb{E}[Y_{t}^{L}|\mathcal{F}_{t}^{W^{0}}]-B_t h\left(t, \rho(t,-R_t^{-1} B_t \mathbb{E}[Y_{t}^{L} | \mathcal{F}_t^{W^{0}})\right)\right.} \\
& \left.\quad +f(t, \mathbb{E}[X_{t}^{L}| \mathcal{F}_t^{W^{0}}])+b\left(t, \rho(t,-R_t^{-1} B_t \mathbb{E}[Y_{t}^{L} | \mathcal{F}_t^{W^{0}}])\right)\right] d t, \\
d\mathbb{E}[Y_{t}^{L}|\mathcal{F}_{t}^{W^{0}}] &=  -\left[A_t\mathbb{E}[Y_{t}^{L}|\mathcal{F}_{t}^{W^{0}}]+Q_t\mathbb{E}[X_{t}^{L}|\mathcal{F}_{t}^{W^{0}}]+Q_t l(t, \mathbb{E}[X_{t}^{L} |\mathcal{F}_t^{W^{0}}])\right] d t+Z_t^{0} dW_t^0, \\
\mathbb{E}[X_0] &= \mathbb{E}[\xi], \quad \mathbb{E}[Y_{t}^{L}|\mathcal{F}^{W^{0}}_{T}]:= L\mathbb{E}[X_{t}^{L}|\mathcal{F}^{W^{0}}_{T}],
\end{aligned}\right.
\end{equation}
which is deduced by taking conditional expectation with respect to $\mathcal{F}^{W^{0}}$ to conditional mean field FBSDE \eqref{conditional mfFBSDE}.
Denote 
$$
P_{t}^{L}+\Phi_{\nu}^{L}(t,\nu) := \Psi^{L}_{t}.
$$
Note that \eqref{conditional FBSDE} is a classical FBSDE, with similar argument in \cite{hua2022unified}, we can get the dynamics of $\Psi^{L}$ as follows:
\begin{equation*}
\begin{aligned}
    d \Psi_{t}^{L} &= \Bigg[\bigg(B^{2}_{t}R_{t}^{-1}+\frac{R^{-1}_{t}B_{t}\left(b^{\prime}(t,\rho(t,-R_t^{-1} B_t \mathbb{E}[Y_{t}^{L}| \mathcal{F}_t^{W^{0}}])))-B_{t}h^{\prime}(t,\rho(t,-R_t^{-1} B_t \mathbb{E}[Y_{t}^{L} | \mathcal{F}_t^{W^{0}}])\right)}{1+h^{\prime}(t,\rho(t,-R_t^{-1} B_t \mathbb{E}[Y_{t}^{L} | \mathcal{F}_t^{W^{0}}]))}\bigg) (\Psi^{L}_{t})^{2}\\
    &\quad -2A_{t}\Psi^{L}_{t}-\left(Q_{t}+Q_{t}l^{\prime}(t,\mathbb{E}[X_{t}^{L}|\mathcal{F}_t^{W^{0}}])\right)\bigg]dt+Z_{t}^{0}dW_{t}^{0}.
\end{aligned}
\end{equation*}
 We define a generator $\mathcal{H}:[0,T]\times \mathbb{R} \rightarrow \mathbb{R}$ via
\begin{equation*}
\begin{aligned}
    \mathcal{H}(t,\psi) &:= -\left(B^{2}_{t}R_{t}^{-1}+\frac{R^{-1}_{t}B_{t}\left(b^{\prime}(t,\rho(t,-R_t^{-1} B_t \mathbb{E}[Y_{t}^{L}| \mathcal{F}_t^{W^{0}}])))-B_{t}h^{\prime}(t,\rho(t,-R_t^{-1} B_t \mathbb{E}[Y_{t}^{L} | \mathcal{F}_t^{W^{0}}])\right)}{1+h^{\prime}(t,\rho(t,-R_t^{-1} B_t \mathbb{E}[Y_{t}^{L} | \mathcal{F}_t^{W^{0}}]))}\right)
 \psi^{2}\\
    &\quad +2A_{t}\psi+\left(Q_{t}+Q_{t}l^{\prime}(t,\mathbb{E}[X_{t}^{L} |\mathcal{F}_t^{W^{0}}])\right)
\end{aligned}
\end{equation*}
such that $d\Psi_{t}^{L} = - \mathcal{H}(t,\Psi_{t}^{L})dt, t\in[0,T]$. 
To obtain the bound of $\Psi^{L}_{t}$, we define the following two generators $\hat{\mathcal{H}}$ and $\bar{\mathcal{H}}$: $\mathbb{R}\rightarrow \mathbb{R}$ via:
\begin{equation*}
\begin{aligned}
    \hat{\mathcal{H}}(\psi) &: = -(\frac{K^{2}}{\delta}+\frac{K^{2}+K^{3}}{\delta\varepsilon_{0}}) \psi^{2} -2K \psi,\\
    \bar{\mathcal{H}}(\psi) &: = -\frac{\delta^{2}}{2K}\psi^{2}+\frac{2K^{3}}{\delta^{2}}+K+K^{2}.
\end{aligned}
\end{equation*}
such that $d\hat{\Psi}_{t} = -\hat{\mathcal{H}}(\hat{\Psi}_{t})dt$ and $d\bar{\Psi}_{t} = -\bar{\mathcal{H}}(\bar{\Psi}_{t})dt$.\\
Under assumption (H), we have 
\begin{equation*}
\mathcal{G}_{t}:=B^{2}_{t}R_{t}^{-1}+\frac{R^{-1}_{t}B_{t}(b^{\prime}(t,\rho(t,-R_t^{-1} B_t \mathbb{E}[Y_{t}^{L}| \mathcal{F}_t^{W^{0}}])))-B_{t}h^{\prime}(t,\rho(t,-R_t^{-1} B_t \mathbb{E}[Y_{t}^{L} | \mathcal{F}_t^{W^{0}}]))}{1+h^{\prime}(t,\rho(t,-R_t^{-1} B_t \mathbb{E}[Y_{t}^{L} | \mathcal{F}_t^{W^{0}}]))}\geq \frac{\delta^{2}}{K},
\end{equation*}
and
\begin{equation*}
 \mathcal{G}_{t} \leq \frac{K^{2}}{\delta}+\frac{K^{2}+K^{3}}{\delta\varepsilon_{0}}.   
\end{equation*}
Using the following inequality
\begin{equation*}
    2A_{t}\psi =2\left( \sqrt{\frac{\mathcal{G}_{t}}{2}}\psi\right)\left(\sqrt{\frac{2}{\mathcal{G}}}A_{t}\right)\leq \frac{\mathcal{G}_{t}}{2}\psi^{2}+\frac{2}{\mathcal{G}_{t}}A_{t}^{2},
\end{equation*}
we obtain
\begin{equation*}
\begin{aligned}
    \mathcal{H}(t,\psi) &= -\mathcal{G}_{t}\psi^{2}+2A_{t}\psi+Q_{t}+Q_{t}l^{\prime}(t,\mathbb{E}[X_{t}^{L} |\mathcal{F}_t^{W^{0}}])  \\
    &\leq -\frac{1}{2}\mathcal{G}_{t}\psi^{2}+\frac{2}{\mathcal{G}_{t}}A_{t}^{2}+Q_{t}+Q_{t}l^{\prime}(t,\mathbb{E}[X_{t}^{L} |\mathcal{F}_t^{W^{0}}])\\
    &\leq -\frac{\delta^{2}}{2K}\psi^{2}+\frac{2K^{3}}{\delta^{2}}+K+K^{2}\\
    &=\bar{\mathcal{H}}(\psi),
    \end{aligned}  
\end{equation*}
and for $\psi \geq 0$, we have
\begin{equation*}
    \begin{aligned}
       \mathcal{H}(t,\psi) &= - \mathcal{G}_{t}\psi^{2}+2A_{t}\psi+Q_{t}+Q_{t}l^{\prime}(t,\mathbb{E}[X_{t}^{L} |\mathcal{F}_t^{W^{0}}]) \\
       &\geq -\left(\frac{K^{2}}{\delta}+\frac{K^{2}+K^{3}}{\delta\varepsilon_{0}}\right)\psi^{2}-2K\psi\\
       &=\hat{\mathcal{H}}(\psi)
    \end{aligned}
\end{equation*}
Let $K_{3}=\frac{K^{2}}{\delta}+\frac{K^{2}+K^{3}}{\delta\varepsilon_{0}} $, $K_{4}=\frac{2K^{3}}{\delta} +K + K^{2}$ and $K_{5} = \frac{\delta^{2}}{2K}$, define the deterministic, bounded and non-negative process $\hat{\Psi},\bar{\Psi}: [0,T] \rightarrow [0,\infty)$ via
\begin{equation*}
\begin{aligned}
  \hat{\Psi}_{t} &: =  \left(\frac{e^{2K(T-t)}}{L}+K_{3}(T-t) \frac{e^{2K(T-t)}-1}{2K(T-t)}\right)^{-1}, \quad t \in[0, T], \\
  \bar{\Psi}_{t} &:= \sqrt{\frac{K_4}{K_5}}\left(1+\frac{2}{\left(1+2(L \sqrt{\frac{K_5}{K_4}}-1)^{-1}\right) \exp \left(2 \sqrt{K_{4} K_5}(T-t)\right)-1}\right), \quad t \in[0, T] . 
\end{aligned}
\end{equation*}
It can be easily verify that  $\hat{\Psi}_{t}, \bar{\Psi}_{t}$ solves  $d\hat{\Psi}_{t} = -\hat{\mathcal{H}}(\hat{\Psi}_{t})dt$, with terminal condition $\hat{\Psi}_{T} = L$ and $d\bar{\Psi}_{t} = -\bar{\mathcal{H}}(\bar{\Psi}_{t})dt$, with terminal condition $\bar{\Psi}_{T} = L$,  respectively. Observe that $\hat{\Psi}_{t}<\hat{P}_{t}$, with similar argument in step 1, we can find appropriate constants $C_{3}$ and $C_{4}$ satisfying $C_{3}<C_{1}$ such that \eqref{u1u2} holds.

\textbf{Step 3:}
Under assumption (H)(iii), it is easy to verify  that $u^{L}(\cdot,0,0)=0
$, this implies directly that for $x>0$ and $\nu>0$,
\begin{equation*}
    \begin{aligned}
        \lambda_{t}^{L}x \leq  &P^{L}_{t}x \leq \kappa_{t}x,\\
        \tilde{\lambda}_{t}^{L}\nu \leq &P^{L}_{t}\nu+\Phi^{L}(t,\nu) \leq \tilde{\kappa}_{t}\nu.
    \end{aligned}
\end{equation*}
Therefore, for all $(t, x,\nu) \in[0, T) \times(0, \infty)\times (0, \infty)$, $u^L(t, x,\nu)$ is uniformly bounded independently of $L$.
\end{proof}

\begin{lemma}\label{Lnondecreasing}
Suppose assumption (H) holds. Then the mapping $(t,L,x,v)\rightarrow u^{L}(t,x,v) $ is continuous and non-decreasing in $L$.
\end{lemma}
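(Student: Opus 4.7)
By Theorem \ref{LQ}(iii) we have the factorization $u^{L}(t,x,\nu)=P_{t}^{L}x+\Phi^{L}(t,\nu)$, so the result reduces to establishing the two properties for $P_{t}^{L}$ and $\Phi^{L}(t,\nu)$ separately and then combining them with the linear-in-$x$ and uniformly Lipschitz-in-$\nu$ dependence from Theorem \ref{LQ}(ii). Continuity of $(t,L)\mapsto P_{t}^{L}$ is classical smooth dependence of solutions to the scalar Riccati ODE \eqref{p equation} on time and terminal datum, while continuity of $(t,\nu,L)\mapsto \Phi^{L}(t,\nu)$ follows from the standard stability theory for FBSDEs with Lipschitz coefficients applied to \eqref{expectation FBSDE}, using that $P^{L}$ depends continuously on $L$ and stays uniformly bounded on compact $L$-intervals by Theorem \ref{LQ}(i) and Lemma \ref{uniformly bounded}. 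Joint continuity of $u^{L}$ in $(t,L,x,\nu)$ then follows.

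Monotonicity of $P_{t}^{L}$ in $L$ is a direct Riccati comparison: for $L_{1}<L_{2}$, $\Delta_{t}:=P_{t}^{L_{2}}-P_{t}^{L_{1}}$ solves the homogeneous linear ODE
\begin{equation*}
d\Delta_{t}=\bigl[B_{t}^{2}R_{t}^{-1}(P_{t}^{L_{1}}+P_{t}^{L_{2}})-2A_{t}\bigr]\Delta_{t}\,dt,\qquad \Delta_{T}=L_{2}-L_{1}>0,
\end{equation*}
and Gr\"onwall gives $\Delta_{t}>0$ throughout $[0,T]$. For the monotonicity of $u^{L}$ as a whole, the plan is to use the probabilistic representation $u^{L}(t,x,\nu)=Y_{t}^{L}$ for \eqref{conditional mfFBSDE} started at $X_{t}^{L}=x$, $\mathbb{E}[X_{t}^{L}|\mathcal{F}_{t}^{W^{0}}]=\nu$, and differentiate in the parameter $L$. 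The sensitivity quadruple $(\partial_{L}X^{L},\partial_{L}Y^{L},\partial_{L}Z^{L},\partial_{L}Z^{0,L})$ satisfies a linearised conditional mean field FBSDE, whose coefficients are derivatives of the original ones evaluated along $(X^{L},Y^{L})$, with $\partial_{L}X_{t}^{L}=0$ and terminal
\begin{equation*}
\partial_{L}Y_{T}^{L}=X_{T}^{L}+L\,\partial_{L}X_{T}^{L}.
\end{equation*}
Since $X_{T}^{L}\ge 0$ by Lemma \ref{x trajectory} and Remark \ref{y remark}, the inhomogeneous part of this terminal is non-negative. The sign conditions in (H)(iv), namely $Q>0$, $f'\le 0$, $l'\ge 0$ together with the structural condition \eqref{(b4)}, are tailored precisely so that after conditioning on $\mathcal{F}^{W^{0}}$ the linearised system belongs to the Peng--Wu monotone class; a standard FBSDE comparison then yields $\partial_{L}Y_{t}^{L}\ge 0$, which gives the required monotonicity of $u^{L}$ in $L$.

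The main obstacle I expect is the comparison for the linearised sensitivity system. The mean-field coupling produces nonlocal cross terms via $\mathbb{E}[\,\cdot\,|\mathcal{F}^{W^{0}}]$ in both the forward drift and the backward driver, so the classical BSDE comparison cannot be invoked off the shelf. One has to work at the level of conditional expectations, split $\partial_{L}X^{L}=(\partial_{L}X^{L}-\mathbb{E}[\partial_{L}X^{L}|\mathcal{F}^{W^{0}}])+\mathbb{E}[\partial_{L}X^{L}|\mathcal{F}^{W^{0}}]$ and similarly for $\partial_{L}Y^{L}$, apply It\^o's formula to the product $\partial_{L}X\cdot\partial_{L}Y$, and use \eqref{(b4)} to show that the $b'$, $h'$ cross terms combine with the $f'$, $l'$ contributions to have a definite sign. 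Once this is in place, together with the uniform bounds from Lemma \ref{uniformly bounded}, the monotonicity follows and the continuity statements are routine consequences of the stability estimates already available from \cite{hua2023linearquadratic}.
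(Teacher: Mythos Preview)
Your overall strategy---differentiate in $L$ and show the derivative is nonnegative---matches the paper's, but the execution diverges at the crucial step, and the route you propose is harder and not closed as written.

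The paper does \emph{not} attempt an FBSDE comparison for the sensitivity pair $(\partial_{L}X^{L},\partial_{L}Y^{L})$. Instead it enlarges \eqref{conditional mfFBSDE} by the trivial forward component $L_{t}\equiv L$ (with a truncated terminal $Y_{T}=(0\vee X_{T}\wedge C)(0\vee L_{T}\wedge C)$) and exploits the affine structure $u^{L}(t,x,\nu)=P_{t}^{L}x+\Phi^{L}(t,\nu)$ to express the $L$-derivative of the enlarged decoupling field along the solution as
\begin{equation*}
\gamma_{t}:=\partial_{L}u=\hat{P}_{t}X_{t}+\hat{\varphi}_{t}-\Lambda_{t}\hat{\nu}_{t},
\end{equation*}
where $\hat{P}_{t}=\partial_{L}P_{t}^{L}$, $\hat{\varphi}_{t}=\partial_{L}\varphi_{t}^{L}$, $\hat{\nu}_{t}=\partial_{L}\nu_{t}^{L}$ and $\Lambda_{t}=\Phi^{L}_{\nu}(t,\nu_{t}^{L})$. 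Differentiating the dynamics of $P^{L}$, $\varphi^{L}$, $\nu^{L}$, $X^{L}$ and $\Lambda$ and combining them via the product rule, the forward sensitivities cancel and one is left with a \emph{linear conditional mean-field BSDE} for $\gamma$ with bounded coefficients and nonnegative terminal datum (namely $\partial_{L}[(0\vee x\wedge C)(0\vee L\wedge C)]\ge 0$). A scalar BSDE comparison then gives $\gamma_{t}\ge 0$ at once, and the uniform Lipschitz bounds on $\partial_{x}u$, $\partial_{\nu}u$, $\partial_{L}u$ permit extension from a short interval to all of $[0,T]$.

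Your proposed route faces the obstacle you flag but do not resolve. The terminal condition of the linearised system is $\partial_{L}Y_{T}^{L}=X_{T}^{L}+L\,\partial_{L}X_{T}^{L}$, and only the first summand is nonnegative; the coupling term $L\,\partial_{L}X_{T}^{L}$ has no a priori sign (in fact one expects $\partial_{L}X_{T}^{L}\le 0$). Peng--Wu monotonicity yields wellposedness, not a comparison principle, and the It\^{o}-on-the-product computation you sketch produces an identity for $\partial_{L}X\cdot\partial_{L}Y$, which by itself does not deliver a sign on $\partial_{L}Y$ alone. What is missing is precisely a device that decouples the backward sensitivity from the forward one; the paper's representation of $\gamma$ in terms of $\hat{P},\hat{\varphi},\hat{\nu},\Lambda$ is exactly such a device, and the affine-in-$x$ structure of $u^{L}$ is what makes the cancellation work. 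Your continuity argument, by contrast, is fine and in line with what the paper uses implicitly.
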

\begin{proof}
 First, it is easy to verify $P_{t}^{L}$ is non-decreasing in $L$ by comparison theorem.
 
 Next, we introduce the following conditional mean field FBSDE:
\begin{equation}\label{conditional mfFBSDE with L}
\left\{\begin{aligned}
d X_t&=  {\left[A_t X_t-B_t^2 R_t^{-1} Y_t-B_t h\left(t, \rho(t,-R_t^{-1} B_t \mathbb{E}[Y_t |\mathcal{F}_t^{W^{0}}])\right)\right.} \\
& \left.\quad +f\left(t, \mathbb{E}[X_t |\mathcal{F}_t^{W^{0}}]\right)+b\left(t, \rho(t,-R_t^{-1} B_t \mathbb{E}[Y_t|\mathcal{F}_t^{W^{0}}])\right)\right] d t, \\
d L_{t} &= 0,\\
d Y_t&=  -\left[A_t Y_t+Q_tX_t+Q_t l\left(t, \mathbb{E}[X_t |\mathcal{F}_t^{W^{0}}]\right)\right] d t+Z_t d W_t+Z_t^{0} d W_t^0, \\
X_0 &= \xi,\quad L_{0} = L, \quad Y_T=(0 \vee X_{T} \wedge C) (0 \vee L_{T} \wedge C),
\end{aligned}\right.
\end{equation}
where $C>0$ is an arbitrary but fixed positive constant. We observe that if we choose the initial value $\xi$, $L$ between 0 and $C$, then the terminal
condition $Y_{T} = LX_{T}$ is satisfied and for any $\xi$ and $L$, we can always find the constant $C$ as $C>0$ can be chosen arbitrary large, which means that the unique solution $(X^{L}, Y^{L},Z^{L},Z^{0,L})$ of conditional mean field FBSDE \eqref{conditional mfFBSDE} solves FBSDE \eqref{conditional mfFBSDE with L}. We aim to obtain the uniqueness of the solution of FBSDE \eqref{conditional mfFBSDE with L}. To this end, we first consider in a small duration time. It follows from \cite{MFGbook2} that for a small enough time interval $[t_{0},T]$, the uniqueness of the solution of \eqref{conditional mfFBSDE with L} can be obtained. Let us denote $Y_{t} = u(t,X_{t},\mathbb{E}[X_{t}|\mathcal{F}_{t}^{W^{0}}],L_{t})$ as the decoupling field of FBSDE \eqref{conditional mfFBSDE with L} and $\nu_{t} = \mathbb{E}[X_{t}|\mathcal{F}^{W^{0}}_{t}]$. The uniqueness of the solution of FBSDE \eqref{conditional mfFBSDE with L} on small time interval implies that $(X^{L}, Y^{L},Z^{L},Z^{0,L})$ and $(X,Y,Z,Z^{0})$ coincide on time interval $[t_{0},T]$.\\
Next, we consider in the time interval $[t_{0},T]$. By \eqref{u_p_phi}, we have
\begin{equation}\label{equality}
Y_{t}=u(t,X_{t},\mathbb{E}[X_{t}|\mathcal{F}_{t}^{W^{0}}],L_{t}) =Y^{L}_{t}= u^{L}(t,X_{t}^{L},\mathbb{E}[X_{t}^{L}|\mathcal{F}_{t}^{W^{0}}])= P_{t}^{L}X^{L}_{t}+ \varphi_{t}^{L},\quad t\in[t_{0},T],
\end{equation}
where $u^{L}$ is the decoupling field of conditional mean field FBSDE \eqref{conditional mfFBSDE}. Therefore, we have
\begin{equation}\label{u-ul-1}
    \frac{\partial u}{ \partial L }+ \frac{\partial u}{\partial x}\frac{dX_{t}}{dL}+ \frac{\partial u}{\partial \nu} \frac{d\nu_{t}}{dL} 
    = \frac{dP_{t}^{L}}{dL}X_{t}^{L}+P_{t}^{L}\frac{dX_{t}^{L}}{dL}+\frac{d \varphi_{t}^{L}}{d L}.
\end{equation}
The equivalence of $(X_{t},Y_{t})$ and $(X^{L}_{t},Y_{t}^{L})$ indicates that
\begin{equation}\label{u-ul-2}
    \frac{\partial u}{\partial x} = P_{t}^{L},\quad \frac{\partial u}{\partial\nu} = \Phi^{L}_{\nu}(t,\nu),\quad \frac{d\nu}{dL} = \frac{d\nu^{L}}{dL}.
\end{equation}
For simplicity of notations, we denote 
\begin{equation}\label{notation}
    \frac{dP_{t}^{L}}{d L} = \hat{P}_{t}, \quad \frac{d \varphi_{t}^{L}}{dL} = \hat{\varphi}_{t}, \quad \frac{d\nu_{t}^{L}}{d L} = \hat{\nu}_{t},\quad \frac{\partial u}{\partial L} = \gamma_{t},\quad \Phi^{L}_{\nu}(t,\nu) = \Lambda_{t}.
\end{equation}
Combine \eqref{u-ul-1} and \eqref{u-ul-2}, we obtain
\begin{equation}\label{gammaform}
    \gamma_{t} = \hat{P}_{t}X_{t}+\hat{\varphi}_{t} - \Lambda_{t}\hat{\nu}_{t}.
\end{equation}
For simplicity of notations, the superscript $L$ for processes $P^{L},\varphi^{L},\nu^{L}$ will be omitted and we denote
\begin{equation*}
    \begin{aligned}
    \tilde{h}_{t}&:=h(t,\rho(t,-R^{-1}_{t}B_{t}(P_{t}\nu_{t}+\varphi_{t}))),\\
    \tilde{b}_{t}&:=b(t,\rho(t,-R^{-1}_{t}B_{t}(P_{t}\nu_{t}+\varphi_{t}))),\\
    \tilde{h}_{t}^{\prime}&:= h^{\prime}(t,\rho(t,-R^{-1}_{t}B_{t}(P_{t}\nu_{t}+\varphi_{t})))\frac{-R_{t}^{-1}B_{t}}{1+h^{\prime}(t,-R^{-1}_{t}B_{t}(P_{t}\nu_{t}+\varphi_{t}))},\\
        \tilde{b}_{t}^{\prime} & := b^{\prime}(t,\rho(t,-R^{-1}_{t}B_{t}(P_{t}\nu_{t}+\varphi_{t})))\frac{-R_{t}^{-1}B_{t}}{1+h^{\prime}(t,-R^{-1}_{t}B_{t}(P_{t}\nu_{t}+\varphi_{t}))}.
    \end{aligned}
\end{equation*}
From \eqref{p equation} and \eqref{expectation FBSDE} and using \eqref{equality}, it is easy to verify  that $\hat{P},\ \hat{\varphi},\ \hat{\nu},\,\ X$ satisfy the following dynamics
\begin{align*}
        d\hat{P}_{t} &= (2B_{t}^{2}R_{t}^{-1}P_{t}\hat{P}_{t}-2A_{t}\hat{P}_{t})dt,\\
        d\hat{\varphi}_{t} &= -\bigg[A_{t}\hat{\varphi}_{t}-B^{2}_{t}R_{t}^{-1}P_{t}\hat{\varphi}_{t}-B^{2}_{t}R_{t}^{-1}\hat{P}_{t}\varphi_{t}+\hat{P}_{t}f(t,\nu_{t})+P_{t}f^{\prime}(t,\nu_{t})\hat{\nu}_{t}+\hat{P}_{t}\tilde{b}_{t}\\
         &\quad +P_{t}\tilde{b}_{t}^{\prime}(\hat{P}_{t}\nu_{t}+P_{t}\hat{\nu}_{t})+P_{t}\tilde{b}^{\prime}_{t}\hat{\varphi}_{t}+Q_{t}l^{\prime}(t,\nu_{t})\hat{\nu}_{t}-\hat{P}_{t}B_{t}\tilde{h}_{t}-P_{t}B_{t}\tilde{h}_{t}^{\prime}(\hat{P}_{t}\nu_{t}+P_{t}\hat{\nu}_{t})-P_{t}B_{t}\tilde{h}_{t}^{\prime}\hat{\varphi}_{t}\bigg]dt\\
    &\quad +\frac{d\Gamma_{t}}{dL}dW_{t},\\
     d\hat{\nu}_{t} &= \bigg[A_{t}\hat{\nu_{t}}-B^{2}_{t}R^{-1}_{t}P_{t}\hat{\nu}_{t}-B^{2}_{t}R^{-1}_{t}\hat{P}_{t}\nu_{t}-B^{2}_{t}R^{-1}_{t}\hat{\varphi}_{t}-B_{t}\tilde{h}_{t}^{\prime}(\hat{P}_{t}\nu_{t}+P_{t}\hat{\nu}_{t})\\
        &\quad -B_{t}\tilde{h}^{\prime}_{t}\hat{\varphi}_{t}+f^{\prime}(t,\nu_{t})\hat{\nu}_{t}+\tilde{b}_{t}^{\prime}(\hat{P}_{t}\nu_{t}+P_{t}\hat{\nu}_{t})+\tilde{b}_{t}^{\prime}\hat{\varphi}_{t}\bigg]dt,\\
       dX_{t} &= \bigg[(A_{t}-B_{t}^{2}R_{t}^{-1}P_{t})X_{t}-B^{2}_{t}R^{-1}_{t}\varphi_{t}-B_{t}\tilde{h}_{t}+f(t,\nu_{t})+\tilde{b}_{t}\bigg]dt,
\end{align*}
and note that \eqref{expectation FBSDE} is a classical FBSDE, with similar argument in \cite{hua2022unified}, we can get the dynamics of $\Lambda_{t}$ as follows:
\begin{equation*}
    \begin{aligned}
         d\Lambda_{t} &= \bigg[\left(B^{2}_{t}R_{t}^{-1}+B_{t}\tilde{h}_{t}^{\prime}-\tilde{b}_{t}^{\prime}\right)\Lambda_{t}^{2}-\left(2(A_{t}-B^{2}_{t}R_{t}^{-1}P_{t})-B_{t}P_{t}\tilde{h}_{t}^{\prime}+f^{\prime}(t,\nu_{t})+P_{t}\tilde{b}_{t}+P_{t}\tilde{b}_{t}^{\prime}-P_{t}B_{t}\tilde{h}^{\prime}_{t}\right)\Lambda_{t}\\
    &\quad -\left(P_{t}f^{\prime}(t,\nu_{t})+Q_{t}l^{\prime}(t,\nu_{t})+P_{t}^{2}\tilde{b}_{t}^{\prime}-P_{t}^{2}B_{t}\tilde{h}_{t}^{\prime}\right)\bigg]dt +\mathcal{Z}_{t}dW_{t}^{0},
    \end{aligned}
\end{equation*}
The dynamics of $\gamma$ are now deduced from those of $\hat{\nu}$, $\hat{\varphi}$, $\hat{P},\Lambda,X$ using the product rule. For all $t \in [t_{0},T]$, it holds 
\begin{equation}\label{eq-1}
\begin{aligned}
    d \gamma_{t} &= X_{t}d \hat{P}_{t} + \hat{P}_{t}dX_{t} + d\hat{\varphi}_{t}- \hat{\nu}_{t}d\Lambda_{t} -\Lambda_{t} d \hat{\nu}_{t}\\
 &= \bigg\{\Big[-P_{t}\tilde{b}_{t}^{\prime} + B_{t}^{2}R^{-1}_{t}P_{t}+P_{t}B_{t}\tilde{h}_{t}^{\prime}+B_{t}\Lambda_{t}\tilde{h}^{\prime}_{t}-\Lambda_{t}\tilde{b}_{t}^{\prime}-A_{t}+B_{t}R^{-1}_{t}\Lambda_{t}\Big]\hat{\varphi}_{t}\\ 
    &\quad + \Big[-P_{t}\tilde{b}_{t}^{\prime} + B_{t}^{2}R^{-1}_{t}P_{t}+P_{t}B_{t}\tilde{h}_{t}^{\prime}+B_{t}\Lambda_{t}\tilde{h}_{t}^{\prime}-\Lambda_{t} \tilde{b}^{\prime}_{t}-A_{t}+B_{t}R^{-1}_{t}\Lambda_{t}\Big] (-\Lambda_{t} \hat{\nu}_{t})\\
    & \quad +\Big[B^{2}_{t}R_{t}^{-1}P_{t}-A_{t}\Big] \hat{P}_{t}X_{t}+ \Big[-P_{t}\tilde{b}_{t}^{\prime} +P_{t}B_{t}\tilde{h}_{t}^{\prime}+B_{t}\Lambda_{t}\tilde{h}_{t}^{\prime}-\Lambda_{t} \tilde{b}_{t}^{\prime}+B_{t}R^{-1}_{t}\Lambda_{t}\Big]\hat{P}_{t}\nu_{t}dt\bigg\}+\hat{\nu}_{t}\mathcal{Z}_{t}dW_{t}^{0}\\
    & = \bigg\{\Big[B^{2}_{t}R^{-1}_{t}P_{t}-A_{t}\Big] (\hat{\varphi}_{t}+\hat{P}_{t}X_{t}-\Lambda_{t} \hat{\nu}_{t}) \\
    &\quad +\Big[-P_{t}\tilde{b}_{t}^{\prime} +P_{t}B_{t}\tilde{h}_{t}^{\prime}+B_{t}\Lambda_{t}\tilde{h}_{t}^{\prime}-\Lambda_{t}\tilde{b}_{t}^{\prime}+B_{t}R^{-1}_{t}\Lambda_{t}\Big] (\hat{\varphi}_{t}+\hat{P}_{t}\nu_{t}-\Lambda_{t} \hat{\nu}_{t})dt\bigg\}+\hat{\nu}_{t}\mathcal{Z}_{t}dW_{t}^{0}.
\end{aligned}
\end{equation}
From \eqref{gammaform}, and the fact that $\hat{P}_{t}$ are deterministic, $\hat{\nu}_{t},\hat{\varphi}_{t}, \Lambda_{t}$ are all $\mathbb{F}^{0}$-measurable and $\mathbb{E}[X_{t}|\mathcal{F}^{W^{0}}_{t}] = \nu_{t}$,  we obtain that
\begin{equation}\label{eq-2}
    \mathbb{E}[\gamma_{t}|\mathcal{F}_{t}^{W^{0}}] = \hat{P}_{t}\nu_{t}+\hat{\varphi}_{t} - \Lambda_{t}\hat{\nu}_{t}.
\end{equation}
Substituting \eqref{eq-2} into \eqref{eq-1}, we obtain
\begin{equation*}
     d\gamma_{t} = \bigg\{\left(B_{t}^{2}R_{t}^{-1}P_{t}-A_{t}\right)\gamma_{t}+\left(-P_{t}\tilde{b}^{\prime}_{t} +P_{t}B_{t}\tilde{h}_{t}^{\prime}+B_{t}\Lambda_{t}\tilde{h}_{t}^{\prime}-\Lambda_{t} \tilde{b}_{t}^{\prime}+B_{t}R^{-1}_{t}\Lambda_{t}
\right)\mathbb{E}[\gamma_{t}|\mathcal{F}_{t}^{W^{0}}]\bigg\}dt+\hat{\nu}_{t}\mathcal{Z}_{t}dW_{t}^{0}.
\end{equation*}
It is obvious that the process $\gamma$ satisfies conditional mean field BSDEs with linear driver and bounded coefficient as $P_{t}$ and $\Lambda_{t}$ are uniformly bounded according to lemma \ref{uniformly bounded}. Note that for all $\tilde{x}, \tilde{\nu},\tilde{L} \in \mathbb{R}$ it holds
$$
u(T, \tilde{x},\tilde{\nu}, \tilde{L})=(0 \vee(\tilde{x}) \wedge C)(0 \vee \tilde{L} \wedge C),
$$
such that $\frac{u(T, x,\nu, L)}{\partial L}$ are both nonnegative and uniformly bounded (with the respective bounds depending on $C$). Therefore, using comparison theorem, we obtain that $\frac{\partial u}{\partial L}$ is nonnegative and uniformly bounded, that is, independently of $t_0$. Moreover, $\frac{\partial u}{\partial x}=P_{t}^{L}$ and $\frac{\partial u}{\partial \nu}=\Phi^{L}_{\nu}$ are also uniformly bounded. Then, following similar arguments as \cite{hua2022unified}, we can extend the solution from small duration to the whole interval and get the existence and uniqueness of solutions to the conditional mean field FBSDE \eqref{conditional mfFBSDE with L} on the whole time interval. Therefore, the unique solutions of FBSDE \eqref{conditional mfFBSDE with L} and FBSDE \eqref{conditional mfFBSDE} coincide on $[0,T]$. Therefore, the monotonicity of $u$ in $L$ is inherited by $u^{L}$. 
\end{proof}

Lemma \ref{uniformly bounded} states that for all $(t, x,\nu) \in[0, T) \times(0, \infty)\times (0, \infty)$, $u^L(t, x,\nu)$ is uniformly bounded independently of $L$ . Moreover, lemma \ref{Lnondecreasing} gives that $u^L(t, x,\nu)$ is non-decreasing with respect of $L$. These yield pointwise convergence of $u^{L}(t,x,L)$ for $L\rightarrow \infty$. Now we can define
\begin{equation}\label{u infty defination}
u^{\infty}(s,x,\nu) := \lim_{L \rightarrow \infty} u^{L}(s,x,\nu),\quad s\in[0,T),~x\in(0, \infty),~\nu\in(0, \infty).
\end{equation}
Moreover, since the sequence $P^{L}$ is uniformly bounded and non-decreasing in $L$, we can also define
\begin{equation}
P^{\infty}_{t}:= \lim_{L \rightarrow \infty}P_{t}^{L}.   
\end{equation}

Now we would like to obtain the convergence of $X^{L}$ for $L\rightarrow \infty$. It is worth emphasizing that we can not  get the non-increasing property of $X^{L}$ with respect to $L$ although we can obtain the non-increasing property of $\mathbb{E}[X_{t}^{L}|\mathcal{F}_{t}^{W^{0}}]$, which is another different point compared with the case without mean field term in \cite{2020Fromm}. 
\begin{lemma}\label{Xinfty}
  Under assumption (H), the sequence $X_{t}^{L}$ converges in $L^2_{\mathbb{F}}\left([0, T); \mathbb{R}\right)$ for any $t \in\left[0, T\right)$ to some limit $X^{\infty}_{t}$. 
\end{lemma}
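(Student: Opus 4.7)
The plan is to show that $\{X^L\}_{L>0}$ is Cauchy in $L^2_{\mathbb{F}}([0,t_0];\mathbb{R})$ for every $t_0 \in [0,T)$, by combining the pointwise convergence $u^L \to u^\infty$ from \eqref{u infty defination} with a Gronwall-type stability estimate for the forward equation of the conditional mean field FBSDE \eqref{conditional mfFBSDE}. The restriction to $[0,t_0]$ is dictated by the fact that the uniform Lipschitz controls $\kappa_t$ and $\tilde\kappa_t$ from Lemma \ref{uniformly bounded} blow up like $(T-t)^{-1}$ near $T$.

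First, I would record the uniform-in-$L$ bounds that drive the estimate. By Lemma \ref{x trajectory} and Remark \ref{y remark}, $X_t^L$ is nonnegative and non-increasing in $t$, and since $\xi \in L^\infty_{\mathcal{F}_0}$ by assumption (H)(i), one has $0 \le X_t^L \le \|\xi\|_{L^\infty}$ uniformly in $(L,t,\omega)$. Writing $\nu_t^L := \mathbb{E}[X_t^L \mid \mathcal{F}_t^{W^0}]$, the same bound holds for $\nu_t^L$. Lemma \ref{uniformly bounded} together with \eqref{u_p_phi} then shows that on $[0,t_0]\times[0,\|\xi\|_{L^\infty}]^2$, $u^L$ is uniformly bounded and uniformly Lipschitz in $(x,\nu)$ with constant depending only on $t_0,K,\delta$, and by Lemma \ref{Lnondecreasing} it converges monotonically to $u^\infty$ pointwise, with $u^L$ dominated by $u^\infty$.

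Next, for any $L_1,L_2>0$ I would write the SDE satisfied by $\Delta X_t := X_t^{L_1}-X_t^{L_2}$ using the representation $Y^L_t = u^L(t,X_t^L,\nu_t^L)$ and $\mathbb{E}[Y_t^L\mid\mathcal{F}_t^{W^0}] = P_t^L\nu_t^L+\Phi^L(t,\nu_t^L)$, and decompose each coefficient difference in a freeze-and-compare way:
\[
u^{L_1}(t,X_t^{L_1},\nu_t^{L_1}) - u^{L_2}(t,X_t^{L_2},\nu_t^{L_2}) = \bigl[u^{L_1}(t,X_t^{L_1},\nu_t^{L_1}) - u^{L_1}(t,X_t^{L_2},\nu_t^{L_2})\bigr] + \bigl[u^{L_1}-u^{L_2}\bigr](t,X_t^{L_2},\nu_t^{L_2}),
\]
and analogously for the differences inside $h(t,\rho(\cdot))$, $b(t,\rho(\cdot))$, $f(t,\cdot)$, using the Lipschitz constant $\varepsilon_0^{-1}$ of $\rho$ from \eqref{phi decoupling} and assumption (H)(iv). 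The first (Lipschitz) bracket is controlled by $C(t_0)(|\Delta X_t|+|\Delta\nu_t|)$; the second (remainder) bracket is uniformly bounded by $2\|u^\infty\|_\infty$ and tends to $0$ pointwise. Using Jensen $|\Delta\nu_t|^2 \le \mathbb{E}[|\Delta X_t|^2\mid\mathcal{F}_t^{W^0}]$ and Gronwall's lemma, I would obtain
\[
\sup_{s \le t_0}\mathbb{E}|\Delta X_s|^2 \le C(t_0)\,\mathbb{E}\int_0^{t_0}\bigl|(u^{L_1}-u^{L_2})(s,X_s^{L_2},\nu_s^{L_2})\bigr|^2\,ds + (\text{analogous remainders from } h,b,f),
\]
and the right-hand side tends to $0$ as $L_1,L_2 \to \infty$ by the dominated convergence theorem, thanks to the uniform boundedness and pointwise convergence of $u^L$. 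Hence $\{X^L\}$ is Cauchy in $L^2_{\mathbb{F}}([0,t_0];\mathbb{R})$ and converges to some limit $X^\infty$.

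The main obstacle is ensuring that every Lipschitz constant appearing in the stability estimate stays bounded uniformly in $L$ on $[0,t_0]$; this is precisely where \eqref{u1}--\eqref{u1u2} of Lemma \ref{uniformly bounded} are used in an essential way, and why the convergence can be obtained only on $[0,t_0]$ for $t_0<T$. A secondary, more technical point specific to the mean-field/common-noise setting is that the $h$ and $b$ coefficients depend on the conditional expectation $\bar u^L_t := P_t^L\nu_t^L+\Phi^L(t,\nu_t^L)$; however, since $\bar u^L$ is Lipschitz in $\nu^L$ with constant $\tilde\kappa_{t_0}$ on $[0,t_0]$ and converges pointwise, the same freeze-and-compare decomposition applies, and Jensen's inequality converts the resulting $|\Delta\nu_t|^2$ terms into $\mathbb{E}|\Delta X_t|^2$ terms before Gronwall.
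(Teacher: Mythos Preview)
Your approach is broadly correct and close in spirit to the paper's, but there are two differences worth noting. First, the paper does \emph{not} control $|\Delta\nu_t|$ via Jensen's inequality; instead it proves separately that $\nu_t^L=\mathbb{E}[X_t^L\mid\mathcal{F}_t^{W^0}]$ is \emph{monotone} in $L$ (by a comparison argument on the forward equation for $\nu^L$, using $u^L_L\ge 0$), hence converges, and then feeds the known convergence of $\nu^L$ into the stability estimate for $X^L$. Your Jensen shortcut is perfectly legitimate and arguably more direct; the paper's two-step route has the small advantage that the monotone convergence of $\nu^L$ is a piece of structural information reused later. Second, in the It\^o estimate the paper moves the term $2B_s^2R_s^{-1}P_s^{L_1}|X_s^{L_1}-X_s^{L_2}|^2$ to the left-hand side with its good sign rather than bounding $P^{L_1}$ by $\kappa_{t_0}$; on $[0,t_0]$ with $t_0<T$ both methods work, so your Gronwall argument is fine here.

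There is one genuine technical gap in your dominated-convergence step. The remainder $[u^{L_1}-u^{L_2}](s,X_s^{L_2},\nu_s^{L_2})$ is evaluated at a point that \emph{moves with $L_2$}, so pointwise convergence of $u^L(s,\cdot,\cdot)$ at fixed spatial arguments does not by itself force the integrand to vanish a.e.\ as $L_1,L_2\to\infty$. You need uniform convergence of $u^L(s,\cdot,\cdot)$ on the compact box $[0,\|\xi\|_{L^\infty}]^2$. This follows from the ingredients you already have: the $u^L(s,\cdot,\cdot)$ are equi-Lipschitz (hence equicontinuous) and converge monotonically to the Lipschitz (hence continuous) limit $u^\infty(s,\cdot,\cdot)$, so Dini's theorem (or the Arzel\`a--Ascoli argument the paper uses explicitly) upgrades the convergence to uniform on that compact set; then DCT in $(s,\omega)$ applies as you intended. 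With this patch your argument goes through.
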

\begin{proof}
 First, we can obtain that $\mathbb{E}[X_{t}^{L}|\mathcal{F}_{t}^{W^{0}}]$ is non-increasing in $L$. In fact, since
\begin{equation*}
\begin{aligned}
d \mathbb{E}[X_t^{L} |\mathcal{F}_{t}^{W^{0}}]
&=  {\left[A_t \mathbb{E}[X_t^{L} |\mathcal{F}_{t}^{W^{0}}]-B_t^2 R_t^{-1} u^{L}(t, \mathbb{E}[X_t^{L} |\mathcal{F}_{t}^{W^{0}}],\mathbb{E}[X^{L}_{t}|\mathcal{F}_{t}^{W^{0}}])+f(t, \mathbb{E}[X_t^{L} |\mathcal{F}_{t}^{W^{0}}])\right.} \\
&\quad  -B_t h\left(t, \rho(t,-R_t^{-1} B_t u^{L}(t,\mathbb{E}[X^{L}_{t}|\mathcal{F}_{t}^{W^{0}}],\mathbb{E}[X^{L}_{t}|\mathcal{F}_{t}^{W^{0}}]))\right)\\
&\quad\left.+b\left(t, \rho(t,-R_t^{-1} B_t  u^{L}(t,\mathbb{E}[X^{L}_{t}|\mathcal{F}_{t}^{W^{0}}],\mathbb{E}[X^{L}_{t}|\mathcal{F}_{t}^{W^{0}}]))\right)\right] d t\\
& :=F_{t}dt,
\end{aligned}
\end{equation*}
we have
\begin{equation*}
F_{L} = -\frac{B_{t}R_{t}^{-1}(B_{t}+b^{\prime}_{t}(t,\cdot))}{1+h^{\prime}(t,\cdot)}u_{L}\leq 0
\end{equation*}
The comparison principle implies that the sequence $\mathbb{E}[X_{t}^{L}|\mathcal{F}_{t}^{W^{0}}]$ is non-increasing in $L$.
Combined with the non-negativity of $X^{L}$, we obtain the sequence $\mathbb{E}[X_{t}^{L}|\mathcal{F}_{t}^{W^{0}}]$ converges to some limit $\mathcal{M}_{t}$ for $L\rightarrow \infty$. Moreover, from lemma \ref{x trajectory},  it holds for all $t\in[0,T)$ and $L>0$, $\max(\mathbb{E}[X_{t}^{L}|\mathcal{F}_{t}^{W^{0}}],\mathcal{M}_{t})\leq \mathbb{E}[\xi]$. By the dominated convergence theorem, the sequence $\mathbb{E}[X_{t}^{L}|\mathcal{F}_{t}^{W^{0}}]$ converges in the space $L^{2}_{\mathbb{F}}([0,T);\mathbb{R})$.
Next, we would like to prove the sequence $X_{t}^{L}$ also converges in $L^{2}_{\mathbb{F}}([0,T);\mathbb{R})$. \\
For any $L_{1},L_{2}>0$, applying It\^{o}'s formula to $|X^{L_{1}}_{t}-X^{L_{2}}_{t}|^{2}$, we obtain,
\begin{align*}
 |X^{L_{1}}_{t}-X^{L_{2}}_{t}|^{2}
&= \int_{0}^{t} 2(X^{L_{1}}_{s}-X^{L_{2}}_{s})\Bigg\{A_t (X^{L_{1}}_{s}-X^{L_{2}}_{s})+f(s, \mathbb{E}[X_s^{L_{1}} | \mathcal{F}_s^{W^{0}}])-f(s, \mathbb{E}[X_s^{L_{2}} | \mathcal{F}_s^{W^{0}}])\\
&\quad-B_s^2 R_s^{-1} \left(u^{L_{1}}(s, X^{L_{1}}_{s},\mathbb{E}[X^{L_{1}}_{s}|\mathcal{F}_{s}^{W^{0}}])-u^{L_{2}}(s, X^{L_{2}}_{s},\mathbb{E}[X^{L_{2}}_{s}|\mathcal{F}_{s}^{W^{0}}])\right) \\
&\quad  -B_s\bigg(h(t, \rho(t,-R_s^{-1} B_s u^{L_{1}}(s,\mathbb{E}[X^{L_{1}}_{s}|\mathcal{F}_{s}^{W^{0}}],\mathbb{E}[X^{L_{1}}_{s}|\mathcal{F}_{s}^{W^{0}}])))\\
&\quad -h(s, \rho(s,-R_s^{-1} B_s u^{L_{2}}(s,\mathbb{E}[X^{L_{2}}_{s}|\mathcal{F}_{s}^{W^{0}}],\mathbb{E}[X^{L_{2}}_{s}|\mathcal{F}_{s}^{W^{0}}])))\bigg)\\
&\quad+b\left(s, \rho(t,-R_s^{-1} B_s  u^{L_{1}}(s,\mathbb{E}[X^{L_{1}}_{s}|\mathcal{F}_{s}^{W^{0}}],\mathbb{E}[X^{L_{1}}_{s}|\mathcal{F}_{s}^{W^{0}}]))\right)\\
&\quad -b\left(s, \rho(t,-R_s^{-1} B_s  u^{L_{2}}(s,\mathbb{E}[X^{L_{2}}_{s}|\mathcal{F}_{s}^{W^{0}}],\mathbb{E}[X^{2}_{s}|\mathcal{F}_{s}^{W^{0}}]))\right)\Bigg\} d s.   
\end{align*}
Using \eqref{u_p_phi}, we have
 \begin{equation*}
\begin{aligned}
        &\quad u^{L_{1}}(s, X^{L_{1}}_{s},\mathbb{E}[X^{L_{1}}_{s}|\mathcal{F}_{s}^{W^{0}}])-u^{L_{2}}(s, X^{L_{2}}_{s},\mathbb{E}[X^{L_{2}}_{s}|\mathcal{F}_{s}^{W^{0}}])\\
        & = u^{L_{1}}(s, X^{L_{1}}_{s},\mathbb{E}[X^{L_{1}}_{s}|\mathcal{F}_{s}^{W^{0}}])-u^{L_{1}}(s, X^{L_{2}}_{s},\mathbb{E}[X^{L_{2}}_{s}|\mathcal{F}_{s}^{W^{0}}])\\
        &\quad \quad +u^{L_{1}}(s, X^{L_{2}}_{s},\mathbb{E}[X^{L_{2}}_{s}|\mathcal{F}_{s}^{W^{0}}])-u^{L_{2}}(s, X^{L_{2}}_{s},\mathbb{E}[X^{L_{2}}_{s}|\mathcal{F}_{s}^{W^{0}}])\\
       &=P_{s}^{L_{1}}(X^{L_{1}}_{s}-X^{L_{2}}_{s})+\Phi^{L_{1}}(s,\mathbb{E}[X^{L_{1}}_{s}|\mathcal{F}_{s}^{W^{0}}])-\Phi^{L_{1}}(s,\mathbb{E}[X^{L_{2}}_{s}|\mathcal{F}_{s}^{W^{0}}])\\
        &\quad \quad +u^{L_{1}}(s, X^{L_{2}}_{s},\mathbb{E}[X^{L_{2}}_{s}|\mathcal{F}_{s}^{W^{0}}])-u^{L_{2}}(s, X^{L_{2}}_{s},\mathbb{E}[X^{L_{2}}_{s}|\mathcal{F}_{s}^{W^{0}}]).
       \end{aligned}
\end{equation*}
Then based on the Lipschitz continuity property of coefficients, we have for any $\varepsilon>0$,
\begin{equation*}
    \begin{aligned}
         &|X^{L_{1}}_{t}-X^{L_{2}}_{t}|^{2}+\int_{0}^{t}(2B_{s}^{2}R_{s}^{-1}P^{L_{1}}_{s}-2A_{s})|X_{s}^{L_{1}}-X_{s}^{L_{2}}|^{2}ds\\
         &\leq \int_{0}^{t}\bigg[\varepsilon|X_{s}^{L_{1}}-X_{s}^{L_{2}}|^{2}+\frac{C}{\varepsilon}\bigg(\left|\Phi^{L_{1}}(t,\mathbb{E}[X^{L_{1}}_{s}|\mathcal{F}_{s}^{W^{0}}])-\Phi^{L_{1}}(s,\mathbb{E}[X^{L_{2}}_{s}|\mathcal{F}_{s}^{W^{0}}])\right|^{2}\\
         &\quad\quad  +\left|u^{L_{1}}(t, \mathbb{E}[X^{L_{1}}_{s}|\mathcal{F}_{s}^{W^{0}}],\mathbb{E}[X^{L_{1}}_{s}|\mathcal{F}_{s}^{W^{0}}])-u^{L_{1}}(s, \mathbb{E}[X^{L_{2}}_{s}|\mathcal{F}_{s}^{W^{0}}],\mathbb{E}[X^{L_{2}}_{s}|\mathcal{F}_{s}^{W^{0}}])\right|^{2}\\
         &\quad \quad+\left|u^{L_{1}}(s, \mathbb{E}[X^{L_{2}}_{s}|\mathcal{F}_{s}^{W^{0}}],\mathbb{E}[X^{L_{2}}_{s}|\mathcal{F}_{s}^{W^{0}}])-u^{L_{2}}(s, \mathbb{E}[X^{L_{2}}_{s}|\mathcal{F}_{s}^{W^{0}}],\mathbb{E}[X^{L_{2}}_{s}|\mathcal{F}_{s}^{W^{0}}])\right|^{2}\bigg)\bigg]ds.
    \end{aligned}
\end{equation*}
Based on the uniformly Lipschitz continuity of $u^{L}_{t}$ and $\Phi^{L}_{t}$ on $[0,T)$ obtained in lemma \ref{uniformly bounded} and the assumption $A_{t}\leq 0$, we have for small enough $\varepsilon>0$, 
\begin{equation}\label{xcauchy}
    \begin{aligned}
&\quad\mathbb{E}\int_{0}^{t}|X_{s}^{L_{1}}-X_{s}^{L_{2}}|^{2}ds\\
       &\leq \bar{C}\mathbb{E}\int_{0}^{t}\bigg[\left|u^{L_{1}}(s, \mathbb{E}[X^{L_{2}}_{s}|\mathcal{F}_{s}^{W^{0}}],\mathbb{E}[X^{L_{2}}_{s}|\mathcal{F}_{s}^{W^{0}}])-u^{L_{2}}(t, \mathbb{E}[X^{L_{2}}_{s}|\mathcal{F}_{s}^{W^{0}}],\mathbb{E}[X^{L_{2}}_{s}|\mathcal{F}_{s}^{W^{0}}])\right|^{2}\\
       &\quad+\left|\mathbb{E}[X^{L_{1}}_{s}|\mathcal{F}_{s}^{W^{0}}]-\mathbb{E}[X^{L_{2}}_{s}|\mathcal{F}_{s}^{W^{0}}]\right|^{2}\bigg]ds,
    \end{aligned}
\end{equation}
for some constant $\bar{C}$ independent of $L^{1}$ and $L^{2}$.

Lemma \ref{uniformly bounded} indicates that the sequence $u^{L}(t,\cdot,\cdot)$ is uniformly equicontinuous when $t\in[0,T)$ and uniformly bounded, and lemma \ref{x trajectory} states that the sequence $u^{L}$ is defined on a closed and bounded interval. Therefore, it follows from Arzel\`{a}–Ascoli theorem that there exists a subsequence $\{u^{L_{K}}\}_{K\in \mathbb{N}}$ that converges uniformly. Recalling the non-decreasing property of sequence of $u^{L}$ proved by lemma \ref{Lnondecreasing}, we can further obtain the sequence $u^{L}$  converges uniformly. Then, by the dominated convergence theorem, the sequence $u^{L}$ converges in the space $L^{2}_{\mathbb{F}}([0,T);\mathbb{R})$. Combined with \eqref{xcauchy}, we obtain $X_{t}^{L}$ is a fundamental sequence in $L^{2}_{\mathbb{F}}([0,T);\mathbb{R})$, and thus converges to some limit,
\begin{equation}\label{xinfty}
    X^{\infty}_{t}:= \lim_{L\rightarrow \infty} X^{L}_{t}, \quad  t\in[0,T).
\end{equation}  
 As $X^{L}$ is nonnegative for all $L>0$ by lemma \ref{x trajectory}, $X^{\infty}$ is nonnegative.
\end{proof}
\begin{lemma}\label{xT=0}
 Under assumption (H),
 it holds that for all $t\in[0,T)$,
 $$
 \lim_{L \rightarrow \infty} u^L\left(t,X^{L}_{t},\mathbb{E}[X^{L}_{t}|\mathcal{F}_{t}^{W^{0}}]\right) \rightarrow u^{\infty}\left(t,X_{t}^{\infty},\mathbb{E}[X^{\infty}_{t}|\mathcal{F}_{t}^{W^{0}}]\right).
 $$ 
 Moreover, $X^{\infty}$ is the unique solution of the following conditional mean field SDE
\begin{equation}\label{equ-x-infty}
    \begin{aligned}
        d X_t^{\infty}= & {\left[A_t X_t^{\infty}-B_t^2 R_t^{-1} u^{\infty}(t,X_{t}^{\infty},\mathbb{E}[X^{\infty}_{t}|\mathcal{F}_{t}^{W^{0}}])+f\left(t, \mathbb{E}\left[X_t^{\infty}| \mathcal{F}_t^0\right]\right)\right.}\\
        &-B_t h\left(t, \rho\left(t,-R_t^{-1} B_t u^{\infty}(t,\mathbb{E}[X^{\infty}_{t}|\mathcal{F}_{t}^{W^{0}}],\mathbb{E}[X^{\infty}_{t}|\mathcal{F}_{t}^{W^{0}}])\right)\right) \\
& \left.+b\left(t, \rho\left(t,-R_t^{-1} B_t  u^{\infty}(t,\mathbb{E}[X^{\infty}_{t}|\mathcal{F}_{t}^{W^{0}}],\mathbb{E}[X^{\infty}_{t}|\mathcal{F}_{t}^{W^{0}}])\right)\right)\right] dt,
\end{aligned}
\end{equation}
In particular, almost all paths of $X^{\infty}$ are absolutely continuous and non-increasing on $[0,T)$ and  $X_{T}^{\infty} = 0$.
\end{lemma}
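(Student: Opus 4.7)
The plan is to establish the four claims in sequence: convergence of the decoupling field along the trajectories, passage to the limit in the forward equation together with uniqueness, the pathwise regularity of $X^\infty$, and finally the terminal identity $X^\infty_T = 0$.

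For the convergence $u^L(t, X^L_t, \nu^L_t) \to u^\infty(t, X^\infty_t, \nu^\infty_t)$, with $\nu^L_t := \mathbb{E}[X^L_t|\mathcal{F}_t^{W^0}]$, I would rely on the fact that the proof of Lemma \ref{Xinfty} already extracted, via Arzel\`a--Ascoli combined with the $L$-monotonicity of Lemma \ref{Lnondecreasing}, the uniform convergence of $u^L$ to $u^\infty$ on every compact subset of $[0,T)\times[0,\infty)^2$. Writing
\[
u^L(t, X^L_t, \nu^L_t) - u^\infty(t, X^\infty_t, \nu^\infty_t) = (u^L - u^\infty)(t, X^\infty_t, \nu^\infty_t) + \bigl[u^L(t, X^L_t, \nu^L_t) - u^L(t, X^\infty_t, \nu^\infty_t)\bigr],
\]
the first term vanishes by uniform convergence and the second is controlled by the uniform-in-$L$ Lipschitz bounds $|\partial_x u^L|\leq \kappa_t$ and $|\partial_\nu u^L|\leq \tilde{\kappa}_t$ from Lemma \ref{uniformly bounded} (finite for $t<T$) combined with the $L^2$-convergence $X^L_t \to X^\infty_t$ of Lemma \ref{Xinfty} and the contraction of conditional expectation.

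Next, I pass to the limit in the integral form $X^L_t = \xi + \int_0^t [\cdots]_s\,ds$ to derive \eqref{equ-x-infty}. For each fixed $t\in[0,T)$, the integrand involves $u^L$, $\rho$, $f$, $b$, $h$, each Lipschitz in its state/measure arguments with constants uniform in $L$ on $[0,t]$ (by (H) and finiteness of $\kappa_s, \tilde{\kappa}_s$ for $s<T$); dominated convergence combined with the previous step yields \eqref{equ-x-infty}. Uniqueness on $[0,T-\varepsilon]$ follows since $u^\infty$ inherits the Lipschitz estimates of $u^L$: conditioning on $\mathcal{F}^{W^0}$ extracts a conditional mean-field ODE for $\nu^\infty$ that is fixed by Gr\"onwall, and a second Gr\"onwall applied pathwise then fixes $X^\infty$; letting $\varepsilon\downarrow 0$ extends uniqueness to $[0,T)$. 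Absolute continuity of the trajectories is immediate since \eqref{equ-x-infty} carries no stochastic integral, and the non-increasing property of $X^\infty$ is inherited pointwise from $X^L$ (Lemma \ref{x trajectory}), so that $X^\infty_T:=\lim_{t\uparrow T}X^\infty_t$ exists pathwise and is non-negative.

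The main obstacle is the identity $X^\infty_T=0$. The strategy is to reduce it to $\nu^\infty_T=0$, since $X^\infty_T\geq 0$ and $\nu^\infty_T = \mathbb{E}[X^\infty_T\mid\mathcal{F}_T^{W^0}]$ (by bounded convergence) together force $X^\infty_T=0$ almost surely. The key observation for $\nu^L_T\to 0$ is that the monotonicity condition \eqref{(b4)} implies $z\mapsto b(t,\rho(t,z))-B_t h(t,\rho(t,z))$ has derivative of the same sign as $B_t$ and vanishes at $z=0$; evaluating at $z=-R_t^{-1}B_t u^L(t,\nu^L_t,\nu^L_t)$ (whose sign is opposite to that of $B_t$, since $u^L\geq 0$ by Remark \ref{y remark}), this combined contribution to the drift of $\nu^L_t$ is non-positive. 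Combined with $A_t\leq 0$, $f(t,\cdot)\leq 0$ on $[0,\infty)$, and the lower bound $u^L(t,\nu,\nu)\geq \tilde{\lambda}^L_t\nu$ from Lemma \ref{uniformly bounded}, this delivers
\[
\frac{d\nu^L_t}{dt}\leq -\delta^2 K^{-1}\,\tilde{\lambda}^L_t\,\nu^L_t,\qquad \tilde{\lambda}^L_t = \frac{C_3}{L^{-1}+(T-t)},
\]
so Gr\"onwall yields $\nu^L_t\leq \nu^L_0\bigl((L^{-1}+T)/(L^{-1}+T-t)\bigr)^{-c_0}$. Passing $L\to\infty$ gives $\nu^\infty_t\leq \nu^\infty_0\bigl(T/(T-t)\bigr)^{-c_0}\to 0$ as $t\uparrow T$, hence $\nu^\infty_T=0$. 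I expect the delicate point to be precisely the use of \eqref{(b4)} to absorb the $b$--$h$ contribution, since without this sign cancellation the lower-order drift terms scale like $\tilde{\kappa}_t\sim 1/(T-t)$ and would a priori threaten the $P^L_t$-driven decay.
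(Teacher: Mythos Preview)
Your argument is correct. The convergence of $u^L$ along the trajectories, the passage to the limit in the forward equation, and uniqueness are handled essentially as in the paper. The genuine difference lies in the proof of $X^\infty_T=0$. The paper works directly on $X^\infty$: it derives a pointwise lower bound $u^\infty(s,X^\infty_s,\nu^\infty_s)\geq c\,X^\infty_s/(T-s)$ by decomposing $u^L=P^L x+\Phi^L(\nu)$ and invoking both \eqref{u1} and \eqref{u1u2}, then feeds this into the drift of $X^\infty$, uses that the $b$--$h$ contribution is nonpositive (via the monotonicity of $\mathcal{G}(s,u)=-B_sh+b$ in $u$), and closes with an integral inequality exploiting the non-increasing property of $X^\infty$ to pull $X^\infty_t$ out of $\int_0^t (T-s)^{-1}X^\infty_s\,ds$. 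You instead work on the conditional mean $\nu^L$: the lower bound $u^L(t,\nu,\nu)\geq\tilde{\lambda}^L_t\nu$ from Lemma~\ref{uniformly bounded} applies cleanly since both slots coincide, yielding a linear differential inequality that Gr\"onwall integrates explicitly; letting $L\to\infty$ gives $\nu^\infty_t\to 0$, and then nonnegativity of $X^\infty_T$ together with $\mathbb{E}[X^\infty_T]=\lim_{t\uparrow T}\mathbb{E}[\nu^\infty_t]=0$ finishes. Your route sidesteps the pointwise bound on $u^\infty(s,X^\infty_s,\nu^\infty_s)$, which is delicate because $X^\infty_s-\nu^\infty_s$ has no definite sign and the inequality \eqref{u1} only helps when this difference is nonnegative; the paper's route, on the other hand, delivers a direct pathwise estimate on $X^\infty_t$ itself rather than going through the mean. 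One minor point: your claim that the non-increasing property is ``inherited pointwise from $X^L$'' is cleaner if argued, as the paper does, directly from the sign of the drift in \eqref{equ-x-infty}, since $L^2$-convergence alone does not immediately transfer pathwise monotonicity.
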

\begin{proof}
First it holds that 
\begin{equation*}
\begin{aligned}
   &\quad \left|u^{L}(s,X^{L}_{s},\mathbb{E}[X^{L}_{s}|\mathcal{F}^{W^{0}}_{s}])-u^{\infty}(s,X^{\infty}_{s},\mathbb{E}[X^{\infty}_{s}|\mathcal{F}^{W^{0}}_{s}])\right|\\
   &\leq \left|u^{L}(s,X^{L}_{s},\mathbb{E}[X^{L}_{s}|\mathcal{F}^{W^{0}}_{s}])-u^{L}(s,X^{\infty}_{s},\mathbb{E}[X^{\infty}_{s}|\mathcal{F}^{W^{0}}_{s}])\right| \\
   & \quad \quad +\left|u^{L}(s,X^{\infty}_{s},\mathbb{E}[X^{\infty}_{s}|\mathcal{F}^{W^{0}}_{s}])-u^{\infty}(s,X^{\infty}_{s},\mathbb{E}[X^{\infty}_{s}|\mathcal{F}^{W^{0}}_{s}])\right|.
\end{aligned}   
\end{equation*}
By construction, both summands on the right-hand side converge to 0 as $L\rightarrow \infty$. Dominated convergence theorem implies for all $t\in [0,T)$, 
\begin{align*}
 & \quad \lim_{L \rightarrow \infty} \int_{0}^{t}\bigg[A_s X_s^{L}-B_s^2 R_s^{-1} u^{L}(s,X_{s}^{L},\mathbb{E}[X^{L}_{s}|\mathcal{F}_{s}^{W^{0}}])+f(s, \mathbb{E}[X_s^{L} | \mathcal{F}_s^{W^{0}}]) \\
   &\quad \quad \quad \quad -B_s h\left(s, \rho\left(s,-R_s^{-1} B_s u^{L}(s,\mathbb{E}[X^{L}_{s}|\mathcal{F}_{s}^{W^{0}}],\mathbb{E}[X^{L}_{s}|\mathcal{F}_{s}^{W^{0}}])\right)\right)\\
& \quad \quad \quad \quad +b\left(s, \rho\left(s,-R_s^{-1} B_su^{L}(s,\mathbb{E}[X^{L}_{s}|\mathcal{F}_{s}^{W^{0}}],\mathbb{E}[X^{L}_{s}|\mathcal{F}_{s}^{W^{0}}])\right)\right)\bigg] ds\\
& = \int_{0}^{t}\bigg[A_s X_s^{\infty}-B_s^2 R_s^{-1} u^{\infty}(s,X_{s}^{\infty},\mathbb{E}[X^{\infty}_{s}|\mathcal{F}_{s}^{W^{0}}])+f(s, \mathbb{E}[X_s^{\infty} | \mathcal{F}_s^{W^{0}}]) \\
   &\quad \quad \quad \quad -B_s h\left(s, \rho(s,-R_s^{-1} B_s u^{\infty}(s,\mathbb{E}[X^{\infty}_{s}|\mathcal{F}_{s}^{W^{0}}],\mathbb{E}[X^{\infty}_{s}|\mathcal{F}_{s}^{W^{0}}]))\right)\\
& \quad \quad \quad \quad +b\left(s, \rho(s,-R_s^{-1} B_s u^{\infty}(s,\mathbb{E}[X^{\infty}_{s}|\mathcal{F}_{s}^{W^{0}}],\mathbb{E}[X^{\infty}_{s}|\mathcal{F}_{s}^{W^{0}}]))\right)\bigg]ds,
\end{align*}
which proves that $X^{\infty}$ satisfies \eqref{equ-x-infty}. Lipschiz continuity of coefficients and of $u^{\infty}$ implies that $X^{\infty}$ is the unique solution of the conditional mean field SDE. \\
Moreover, under assumption (H), the following function 
\begin{equation*}
   \mathcal{G}(s,u):= -B_s h\left(s, \rho\left(s,-R_s^{-1} B_s u\right)\right)+b\left(s, \rho\left(s,-R_s^{-1} B_su\right)\right)
\end{equation*}
satisfies that $\mathcal{G}(s,0)=0$ and is decreasing in $u$. Recalling the nonnegatively of $X^{\infty}$, we can obtain the nonnegatively of the drift coefficient of $X^{\infty}$, and further the non-increasing of the path $X^{\infty}$.
Using \eqref{u1u2} and \eqref{u1}, we have
\begin{equation}\label{u_lower}
\begin{aligned}
   u^{\infty}(s,X^{\infty}_{s},\mathbb{E}[X_s^{\infty}| \mathcal{F}_s^{W^{0}}])& = \lim_{L\rightarrow\infty}(P^{L}_{s}X^{\infty}_{s}+\Phi^{L}(s,\mathbb{E}[X_s^{\infty}| \mathcal{F}_s^{W^{0}}]))  \\
   & = \lim_{L\rightarrow\infty}\left(P^{L}_{s}(X^{\infty}_{s}-\mathbb{E}[X_s^{\infty}| \mathcal{F}_s^{W^{0}}])+(P^{L}_{s}\mathbb{E}[X_s^{\infty}| \mathcal{F}_s^{W^{0}}]+\Phi^{L}(s,\mathbb{E}[X_s^{\infty}| \mathcal{F}_s^{W^{0}}]))\right)\\
   &>=\frac{C_{1}(X^{\infty}_{s}-\mathbb{E}[X_s^{\infty}| \mathcal{F}_s^{W^{0}}])}{T-t}+\frac{C_{3}\mathbb{E}[X_s^{\infty}| \mathcal{F}_s^{W^{0}}]}{T-t}\\
   &\geq \frac{C_{1}X_{s}^{\infty}}{T-t}.
   \end{aligned}
\end{equation}
Using \eqref{u_lower} and since $X^{\infty}$ has nonnegative and non-increasing paths and $u^{\infty}$ is nonnegative, we obtain for all $t\in [0,T)$,
\begin{equation*}
\begin{aligned}
   X_{t}^{\infty} & = \xi+\int_{0}^{t}\bigg[A_s X_s^{\infty}+f(s, \mathbb{E}[X_s^{\infty}| \mathcal{F}_s^{W^{0}}])-B_s^2 R_s^{-1} u^{\infty}(s,X_{s}^{\infty},\mathbb{E}[X^{\infty}_{s}|\mathcal{F}_{s}^{W^{0}}]) \\
   &\quad \quad \quad \quad -B_s h\left(s, \rho(s,-R_s^{-1} B_s u^{\infty}(s,\mathbb{E}[X^{\infty}_{s}|\mathcal{F}_{s}^{W^{0}}],\mathbb{E}[X^{\infty}_{s}|\mathcal{F}_{s}^{W^{0}}]))\right)\\
& \quad \quad \quad \quad  +b\left(s, \rho(s,-R_s^{-1} B_s u^{\infty}(s,\mathbb{E}[X^{\infty}_{s}|\mathcal{F}_{s}^{W^{0}}],\mathbb{E}[X^{\infty}_{s}|\mathcal{F}_{s}^{W^{0}}]))\right)\bigg] ds\\
& \leq \xi+ tK\xi+tK\mathbb{E}[\xi]-\frac{\delta^{2}}{K}\int_{0}^{t} \frac{C_{1}X_{s}^{\infty}}{(T-s)}ds\\
    & \leq (tK+1)\xi+tK\mathbb{E}[\xi]-\frac{C_{1}\delta^{2}X_{t}^{\infty}}{K}\int_{0}^{t} \frac{1}{(T-s)}ds.   
    \end{aligned}
\end{equation*}
Hence it holds 
\begin{equation*}
\begin{aligned}
  &  \quad X_{t}^{\infty}\left(1+\frac{C_{1}\delta^{2}}{K}\int_{0}^{t}\frac{1}{T-s}ds\right)\\
  & \leq (tK+1) \xi+tK\mathbb{E}[\xi].
  \end{aligned}
\end{equation*}
Taking the limit $t \rightarrow T$ implies that $X_{T}^{\infty} =  0$.
\end{proof}

Now we define the control 
\begin{equation}\label{infty control}
  \alpha_{t}^{\infty}:= -B_{t}R_{t}^{-1}u^{\infty}(t,X_{t}^{\infty},\mathbb{E}[X_{t}^{\infty}|\mathcal{F}_{t}^{W^{0}}])-h\left(t,\rho(t,-R_{t}^{-1}B_{t}u^{\infty}(t,X^{\infty}_{t},\mathbb{E}[X_{t}^{\infty}|\mathcal{F}_{t}^{W^{0}}]))\right),t\in [0,T), \alpha^{\infty}_{T} := 0.
\end{equation}

We note that by lemma \ref{xT=0}, we obtain for control $\alpha^{\infty}$, $X^{\infty}_{T} = 0$. It remains to show that it indeed minimize the cost functional.\\
\begin{proof}[Proof of Theorem \ref{main result}]
First we denote
$$\mathcal{V}^{L}(\xi):=\underset{\alpha \in \mathcal{A}(0,T)}{\operatorname{essinf}} \mathcal{J}^{L}\left(\alpha\right) ,\quad \mathcal{V}(\xi):=\underset{\alpha \in \mathcal{A}^{0}(0,T)}{\operatorname{essinf}} \mathcal{J}\left(\alpha\right).
$$
 Since the family of $\mathcal{J}^{L}$ is non-decreasing in $L$, we obtain that the family of $\mathcal{V}^{L}$ is also non-decreasing in $L$. In particular, the limit $\mathcal{V}^{\infty}\left( \xi\right):=\lim _{L \rightarrow \infty} \mathcal{V}^L\left(\xi\right)$ exists. Since it holds that
 \begin{equation*}
\mathcal{V}^L\left(\xi\right)=\underset{\alpha \in \mathcal{A}(0,T)}{\operatorname{essinf}} \mathcal{J}^{L}\left(\alpha\right) \leq \underset{\alpha \in \mathcal{A}^{0}(0,T)}{\operatorname{essinf}}\mathcal{J}^{L}\left( \alpha\right)=\mathcal{V}\left(\xi\right).
 \end{equation*}
 Therefore, we obtain
  \begin{equation}\label{v_infty}
   \mathcal{V}^{\infty}\left(\xi\right) \leq \mathcal{V}\left(\xi\right).
  \end{equation}
 Next, let us as above denote for every $L \in(0, \infty)$ by $\left(X^L, Y^L, Z^L\right)$ the unique solution to the FBSDE \eqref{conditional mfFBSDE}. The nonnegativity of $LX_{T}^{2}$ implies that
\begin{equation}\label{inequlity}
\mathcal{V}^L\left(\xi\right) \geq  \frac{1}{2}\mathbb{E}\left[\int_{0}^T Q_{s}(X_{s}^{L}+l(s,\mathbb{E}[X_{s}^{L}|\mathcal{F}_{s}^{W^{0}}]))^{2}+R_{s}(\alpha^{L}_{s}+h(s,\mathbb{E}[\alpha^{L}_{s}|\mathcal{F}_{s}^{W^{0}}]))^{2}d s \right].
 \end{equation}
 Note that $X^L$ converges to $X^{\infty}$ a.s. for $L \rightarrow \infty$ by definition. And $\alpha^L$ converges for $L \rightarrow \infty$ to $\alpha^{\infty}$ a.s. as well. Nonnegativity, Fatou's lemma and \eqref{inequlity} then imply that
 \begin{equation*}
\mathcal{V}^{\infty}\left(\xi\right) \geq \frac{1}{2}\mathbb{E}\left[\int_{0}^T Q_{s}(X_{s}^{\infty}+l(s,\mathbb{E}[X_{s}^{\infty}|\mathcal{F}_{s}^{W^{0}}]))^{2}+R_{s}(\alpha^{\infty}_{s}+h(s,\mathbb{E}[\alpha^{\infty}_{s}|\mathcal{F}_{s}^{W^{0}}]))^{2}d s \right]  \geq  \mathcal{V}\left(\xi\right).
 \end{equation*}
Combined with \eqref{v_infty}, we obtain
\begin{equation*}
\mathcal{V}\left(\xi\right)= \mathcal{V}^{\infty}\left(\xi\right)= \frac{1}{2}\mathbb{E}\left[\int_{0}^T Q_{s}(X_{s}^{\infty}+l(\mathbb{E}[X_{s}^{\infty}|\mathcal{F}_{s}^{W^{0}}]))^{2}+R_{s}(\alpha^{\infty}_{s}+h(\mathbb{E}[\alpha^{\infty}_{s}|\mathcal{F}_{s}^{W^{0}}]))^{2}d s \right].
 \end{equation*}
\end{proof}
\subsection{The FBSDE associated to problem (C-MF)}\label{3.3}
We have shown that the solution component $X^L$ of the FBSDE \eqref{conditional mfFBSDE} converges to $X^{\infty}$ as $L \rightarrow \infty$ and the associated decoupling field $u^L$ converges to $u^{\infty}$. This allows to define a process $Y^{\infty}$ via $Y^{\infty}=u^{\infty}\left(\cdot, X^{\infty},\mathbb{E}[X^{\infty}|\mathcal{F}^{W^{0}}]\right)$. The next result shows that also the solution components $Z^L,Z^{0,L}$ have  limits $Z^{\infty},Z^{0,\infty}$, respectively and that the process $\left(X^{\infty}, Y^{\infty}, Z^{\infty},Z^{0,\infty}\right)$ satisfies a new coupled conditional mean field FBSDE.  In contrast to \eqref{conditional mfFBSDE}, there are two constraints imposed on the forward process $X^{\infty}$, while $Y^{\infty}$ is not required to satisfy any boundary conditions.
\begin{Theorem}\label{existence}
For every $L\in(0,\infty)$, let $(X^{L},Y^{L},Z^{L},Z^{0,L})$ be the solution of the conditional mean field FBSDE \eqref{conditional mfFBSDE} and $u^{L}$ be the associated decoupling field. Let $u^{\infty}$ and $X^{\infty}$ be the limits of $u^{L}$ and $X^{L}$ as $L\rightarrow \infty$ defined by \eqref{u infty defination} and \eqref{xinfty}. Let $Y^{\infty}: \Omega \times[0, T) \rightarrow \mathbb{R}$ satisfy for all $t \in[0, T)$ a.s. that $Y_t^{\infty}=u^{\infty}\left(t, X_t^{\infty},\mathbb{E}[X_t^{\infty}|\mathcal{F}^{W^{0}}_{t}]\right)$. Then the sequence $Z^L$ converges in $L^2_{\mathbb{F}}\left([0, T);\mathbb{R}^d\right)$ to $Z^{\infty}$, the sequence $Z^{0,L}$ converges in $L^2_{\mathbb{F}}\left([0, T);\mathbb{R}^d\right)$  to $Z^{0,\infty}$, and the process $\left(X^{\infty}, Y^{\infty}, Z^{\infty},Z^{0,\infty}\right)$ satisfies for all $0 \leq t \leq r<T$ a.s. that
\begin{equation}\label{conditional constrainted mfFBSDE}
\left\{\begin{aligned}
X_{t}^{\infty} &=  \int_{0}^{t}{\left[A_s X_{s}^{\infty}-B_s^2 R_s^{-1} Y_{s}^{\infty}-B_s h\left(s, \rho(s,-R_s^{-1} B_s \mathbb{E}[Y_{s}^{\infty} |\mathcal{F}_s^{W^{0}}])\right)\right.} \\
& \left.\quad +f\left(s, \mathbb{E}[X_{s}^{\infty} |\mathcal{F}_s^{W^{0}}]\right)+b\left(s, \rho(s,-R_s^{-1} B_s \mathbb{E}[Y_{s}^{\infty}|\mathcal{F}_s^{W^{0}}])\right)\right] ds,\quad X_{T}^{\infty} = 0 \\
Y_{t}^{\infty} &=Y_{r}^{\infty}+\int_{t}^{r}\left[A_sY_s^{\infty}+Q_sX_s^{\infty}+Q_s l\left(s, \mathbb{E}[X_{s}^{\infty} |\mathcal{F}_s^{W^{0}}]\right)\right] d s-\int_{t}^{r}Z_{s}^{\infty}d W_s-\int_{t}^{r}Z_s^{0, \infty} dW_s^0.
\end{aligned}\right.
\end{equation}
\end{Theorem}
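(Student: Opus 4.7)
The plan is to identify $Y^\infty$, $(Z^\infty, Z^{0,\infty})$ explicitly by passing to the limit in the backward part of \eqref{conditional mfFBSDE}, treating it on $[0,r]$ with $r<T$ as a BSDE with terminal data $Y^L_r$. First I verify that $Y^L \to Y^\infty$ in $L^2_{\mathbb{F}}([0,r];\mathbb{R})$ for every $r<T$: by Theorem \ref{LQ}(iii), $Y^L_t = u^L(t, X^L_t, \mathbb{E}[X^L_t|\mathcal{F}^{W^0}_t])$; the Arzelà–Ascoli plus monotonicity argument already used in Lemma \ref{Xinfty} gives uniform convergence of $u^L \to u^\infty$ on compact subsets of $[0,r]\times(0,\infty)\times(0,\infty)$, which, combined with the $L^2$ convergence $X^L \to X^\infty$ from Lemma \ref{Xinfty} and the contraction property of conditional expectation, yields $Y^L \to Y^\infty$ in $L^2_{\mathbb{F}}([0,r];\mathbb{R})$. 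In particular, $Y^L_r \to Y^\infty_r$ in $L^2$ because $Y^L_r$ is uniformly bounded by Lemma \ref{uniformly bounded} and dominated convergence applies.

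Next, for $L_1, L_2 > 0$ and $r \in [0, T)$, I apply Itô's formula to $|Y^{L_1}_s - Y^{L_2}_s|^2$ on $[0, r]$ and take expectation. Using the boundedness of $A_t, Q_t$, the Lipschitz continuity of $l(t,\cdot)$ and Young's inequality,
\begin{align*}
& \mathbb{E}\int_0^r \bigl(|Z^{L_1}_s - Z^{L_2}_s|^2 + |Z^{0,L_1}_s - Z^{0,L_2}_s|^2\bigr)\, ds \\
&\quad \leq \mathbb{E}|Y^{L_1}_r - Y^{L_2}_r|^2 + C_r \mathbb{E}\int_0^r \bigl(|Y^{L_1}_s - Y^{L_2}_s|^2 + |X^{L_1}_s - X^{L_2}_s|^2\bigr)\, ds.
\end{align*}
By the previous paragraph and Lemma \ref{Xinfty}, the right-hand side converges to $0$ as $L_1, L_2 \to \infty$, so $(Z^L, Z^{0,L})$ is $L^2$-Cauchy on $[0,r]$; a diagonal/consistency argument across $r \uparrow T$ produces limits $(Z^\infty, Z^{0,\infty})$ defined on $[0,T)$. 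With these limits in hand, for fixed $0 \leq t \leq r < T$ I pass to the limit in the integral form
$$Y^L_t = Y^L_r + \int_t^r \bigl[A_s Y^L_s + Q_s X^L_s + Q_s l(s, \mathbb{E}[X^L_s|\mathcal{F}^{W^0}_s])\bigr]\, ds - \int_t^r Z^L_s\, dW_s - \int_t^r Z^{0,L}_s\, dW^0_s,$$
where the stochastic integrals converge by Itô isometry. This gives the backward equation of \eqref{conditional constrainted mfFBSDE}. The forward equation and $X^\infty_T = 0$ are supplied by Lemma \ref{xT=0}, and the identity $Y^\infty_t = u^\infty(t, X^\infty_t, \mathbb{E}[X^\infty_t|\mathcal{F}^{W^0}_t])$ rewrites the drift of $X^\infty$ in terms of $Y^\infty$ in the exact form required.

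The principal obstacle is that no terminal condition is available for $Y^\infty$, and the Cauchy estimate above does \emph{not} extend uniformly to the closed interval: the term $\mathbb{E}|Y^{L_1}_r - Y^{L_2}_r|^2$ cannot be bounded independently of $r$ as $r \uparrow T$, since $Y^L_T = L X^L_T$ diverges in $L$. Consequently, the convergence of $(Z^L, Z^{0,L})$ is necessarily local, on compact subintervals $[0,r] \subset [0,T)$, and the limiting system \eqref{conditional constrainted mfFBSDE} is stated only for $0 \leq t \leq r < T$ with $Y^\infty$ left free at $T$. This singular terminal behavior, rather than any estimate on its own, is the structural content of the theorem, and the only technical point requiring care is to verify that the limits of $(Z^L, Z^{0,L})$ constructed on different subintervals agree almost everywhere, which follows from uniqueness of $L^2$-limits.
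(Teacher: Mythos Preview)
Your proposal is correct and follows essentially the same approach as the paper: both apply It\^o's formula to $|Y^{L_1}-Y^{L_2}|^2$ on $[0,r]$, invoke Lemma \ref{Xinfty} for $X^L$-convergence and Lemma \ref{uniformly bounded} plus dominated convergence for $Y^L$-convergence to conclude that $(Z^L,Z^{0,L})$ is Cauchy, then pass to the limit in the backward integral equation and appeal to Lemma \ref{xT=0} for the forward part. If anything, you are slightly more explicit than the paper about the fact that the Cauchy estimate is local (on each $[0,r]$ with $r<T$) and that a consistency argument is needed to patch the limits together on $[0,T)$; the paper leaves this implicit.
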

\begin{proof}
By lemma \ref{xT=0} and the definition of $Y^{\infty}$, we obtain the forward equation in \eqref{conditional constrainted mfFBSDE} holds. It follows from Theorem \ref{LQ} that for any $L>0$, it holds
\begin{equation}\label{yL}
    Y_{t}^{L} =Y_{r}^{L}+\int_{t}^{r}\left[A_sY_s^{L}+Q_sX_s^{L}+Q_s l\left(s, \mathbb{E}[X_{s}^{L} |\mathcal{F}_s^{W^{0}}]\right)\right] d s-\int_{t}^{r}Z_{s}^{L}d W_s-\int_{t}^{r}Z_s^{0,L} dW_s^0. 
\end{equation}
Applying It\^{o}'s formula for $L\leq L^{\prime}$, we have
\begin{equation}\label{z}
    \begin{aligned}
        &\mathbb{E}\int_{t}^{r}|Z_{s}^{L}-Z_{s}^{L^{\prime}}|^{2}ds+ \mathbb{E}\int_{t}^{r}|Z_{s}^{0,L}-Z_{s}^{0,L^{\prime}}|^{2}ds\\ 
        &\leq \mathbb{E}|Y_{r}^{L}-Y_{r}^{L^{\prime}}|^{2}+2C\mathbb{E}\int_{t}^{r}|Y_{s}^{L}-Y_{s}^{L^{\prime}}|\left(|Y_{s}^{L}-Y_{s}^{L^{\prime}}|+|X_{s}^{L}-X_{s}^{L^{\prime}}|+\left|\mathbb{E}[X_{s}^{L}|\mathcal{F}_{s}^{W^{0}}]-\mathbb{E}[X_{s}^{L^{\prime}}|\mathcal{F}_{s}^{W^{0}}]\right|\right)\\
   & \leq \mathbb{E}|Y_{r}^{L}-Y_{r}^{L^{\prime}}|^{2}+4C\mathbb{E}\int_{t}^{r}\left(|Y_{s}^{L}-Y_{s}^{L^{\prime}}|^{2}+|X_{s}^{L}-X_{s}^{L^{\prime}}|^{2}+|\mathbb{E}[X_{s}^{L}|\mathcal{F}_{s}^{W^{0}}]-\mathbb{E}[X_{s}^{L^{\prime}}|\mathcal{F}_{s}^{W^{0}}]|^{2}\right).
    \end{aligned}
\end{equation}

Lemma \ref{Xinfty} implies that $X^{L}$  converges in the space $L^2_{\mathbb{F}}\left([0, T);\mathbb{R}\right)$. Moreover, by lemma \ref{uniformly bounded}, we have for all $s \in[0, r]$ and $L>0$ that $\max \left(\left|Y_s^L\right|,\left|Y_s^{\infty}\right|\right) $ is uniformly bounded. By the dominated convergence theorem, the sequence $Y^L$ also converges in the space $L^2_{\mathbb{F}}\left([0, T); \mathbb{R}\right)$. Combining this with \eqref{z}, we deduce that $Z^L$ and $Z^{0,L}$ are Cauchy sequences in $L^2_{\mathbb{F}}\left(\left(0, r\right), \mathbb{R}^{d}\right)$, and thus converges to $Z^{\infty}$ and $Z^{0,\infty}$, respectively. 
Therefore, we can take the limit $L \rightarrow \infty$ in \eqref{yL} to obtain
\begin{equation*}
Y_{t}^{\infty} =Y_{r}^{\infty}+\int_{t}^{r}\left[A_sY_s^{\infty}+Q_sX_s^{\infty}+Q_s l(s, \mathbb{E}[X_{s}^{\infty} |\mathcal{F}_s^{W^{0}}])\right] d s-\int_{t}^{r}Z_{s}^{\infty}d W_s-\int_{t}^{r}Z_s^{0, \infty} dW_s^0.    
\end{equation*}
This completes the proof.
\end{proof}

Next, we characterize $(X^{\infty},Y^{\infty},Z^{\infty},Z^{0,\infty})$ as the unique solution of \eqref{conditional constrainted mfFBSDE} within a class of processes satisfying suitable assumptions.
\begin{Theorem}
 The process $\left(X^{\infty}, Y^{\infty}, Z^{\infty}, Z^{0,\infty}\right)$ constructed in Theorem \ref{existence} is the unique solution of \eqref{conditional constrainted mfFBSDE} satisfying $(X^{\infty}, Y^{\infty}, Z^{\infty},Z^{0,\infty}) \in L^{2}_{\mathbb{F}}([0,T);\mathbb{R})\times L^{2}_{\mathbb{F}}([0,T);\mathbb{R})\times L^{2}_{\mathbb{F}}([0,T);\mathbb{R}^{d})\times L^{2}_{\mathbb{F}}([0,T);\mathbb{R}^{d})$ and $X^{\infty}, Y^{\infty}\geq 0$.
\end{Theorem}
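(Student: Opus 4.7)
The plan is to prove uniqueness by an energy estimate obtained by applying Itô's formula to the product of differences. Let $(\tilde{X},\tilde{Y},\tilde{Z},\tilde{Z}^{0})$ be any other solution in the stated class, and set $\Delta X:=\tilde{X}-X^{\infty}$, $\Delta Y:=\tilde{Y}-Y^{\infty}$, $\Delta Z:=\tilde{Z}-Z^{\infty}$, $\Delta Z^{0}:=\tilde{Z}^{0}-Z^{0,\infty}$. Both solutions share the initial value $\xi$ and the terminal constraint $X_{T}=0$, so $\Delta X_{0}=\Delta X_{T}=0$. Since the forward equation has no Brownian integral, $\langle \Delta X,\Delta Y\rangle\equiv 0$, and for any $r\in[0,T)$ Itô's formula together with $\Delta X_{0}=0$ gives
\begin{equation*}
\mathbb{E}[\Delta X_{r}\Delta Y_{r}]=\mathbb{E}\int_{0}^{r}\bigl(\Delta Y_{t}\,\Delta b_{t}-\Delta X_{t}\,\Delta g_{t}\bigr)dt,
\end{equation*}
where $\Delta b_{t},\Delta g_{t}$ denote the forward drift and backward driver differences.

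The linear-in-$(\Delta X,\Delta Y)$ parts of $\Delta b$ and $\Delta g$ contribute the coercive quadratic $-B_{t}^{2}R_{t}^{-1}(\Delta Y_{t})^{2}-Q_{t}(\Delta X_{t})^{2}$. The remaining mean-field differences $f(t,\tilde{\mu}_{X,t})-f(t,\mu_{X,t}^{\infty})$, $l(t,\tilde{\mu}_{X,t})-l(t,\mu_{X,t}^{\infty})$, and $G(t,\tilde{\mu}_{Y,t})-G(t,\mu_{Y,t}^{\infty})$ with $G(t,\mu):=-B_{t}h(t,\rho(t,-R_{t}^{-1}B_{t}\mu))+b(t,\rho(t,-R_{t}^{-1}B_{t}\mu))$ are all $\mathcal{F}_{t}^{W^{0}}$-measurable, so conditioning lets me replace $\Delta Y_{t}$ by $\Delta\mu_{Y,t}$ and $\Delta X_{t}$ by $\Delta\mu_{X,t}$ inside the expectation. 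Under Assumption (H)(iv), $l'\geq 0$ makes the $l$-contribution non-positive; the inequality $\frac{B_{t}R_{t}^{-1}(b'-B_{t}h')}{1+h'}\geq 0$ forces $G'\leq 0$, so the $G$-contribution is non-positive; the only sign-indefinite term is the cross term $\mathbb{E}[\Delta\mu_{Y,t}(f(t,\tilde{\mu}_{X,t})-f(t,\mu_{X,t}^{\infty}))]$, which by Lipschitz continuity ($|f'|\leq K$) together with Young's and Jensen's inequalities $(\Delta\mu)^{2}\leq\mathbb{E}[(\Delta\cdot)^{2}|\mathcal{F}^{W^{0}}]$ is dominated by $\tfrac{c}{2}\mathbb{E}(\Delta Y_{t})^{2}+C_{\varepsilon}\mathbb{E}(\Delta X_{t})^{2}$. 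Choosing $c<\delta^{2}/K\leq B_{t}^{2}R_{t}^{-1}$ absorbs the first piece into the coercive quadratic, and the second is controlled by a Grönwall-type bound $\mathbb{E}(\Delta X_{t})^{2}\leq C_{T}\int_{0}^{t}\mathbb{E}(\Delta Y_{s})^{2}ds$ obtained from integrating the forward equation. Consequently
\begin{equation*}
\mathbb{E}[\Delta X_{r}\Delta Y_{r}]+c_{1}\mathbb{E}\int_{0}^{r}(\Delta Y_{t})^{2}dt\leq c_{2}\int_{0}^{r}(r-s)\mathbb{E}(\Delta Y_{s})^{2}ds,\qquad r\in[0,T),
\end{equation*}
for absolute constants $c_{1},c_{2}>0$.

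The main obstacle is passing to the limit $r\to T$ in the boundary term $\mathbb{E}[\Delta X_{r}\Delta Y_{r}]$, since $\tilde{Y}$ carries no terminal condition and may blow up. Writing $\Delta X_{r}\Delta Y_{r}=\Delta X_{r}(\tilde{Y}_{r}-Y_{r}^{\infty})$, I would proceed as follows. Pathwise continuity of the two state trajectories together with $\tilde{X}_{T}=X_{T}^{\infty}=0$ gives $\Delta X_{r}\to 0$ a.s.; integrating the forward equation yields the bound $|\Delta X_{r}|\leq C(|\xi|+\int_{0}^{T}|\tilde{Y}_{t}|dt+\int_{0}^{T}|Y_{t}^{\infty}|dt)$ whose square is an integrable, $r$-independent majorant, so by dominated convergence $\mathbb{E}(\Delta X_{r})^{2}\to 0$. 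Lemma \ref{uniformly bounded} shows $Y_{r}^{\infty}=u^{\infty}(r,X_{r}^{\infty},\mathbb{E}[X_{r}^{\infty}|\mathcal{F}_{r}^{W^{0}}])$ is uniformly bounded on $[0,T)$, so $\mathbb{E}[\Delta X_{r}Y_{r}^{\infty}]\to 0$. For $\mathbb{E}[\Delta X_{r}\tilde{Y}_{r}]$, non-negativity gives $\Delta X_{r}\tilde{Y}_{r}\geq -X_{r}^{\infty}\tilde{Y}_{r}$, and since $\tilde{Y}\in L_{\mathbb{F}}^{2}([0,T))$ a Fubini-Cauchy-Schwarz argument produces a sequence $r_{n}\uparrow T$ along which $\mathbb{E}\tilde{Y}_{r_{n}}^{2}$ stays bounded while $\mathbb{E}(X_{r_{n}}^{\infty})^{2}\to 0$, yielding $\liminf_{n}\mathbb{E}[\Delta X_{r_{n}}\Delta Y_{r_{n}}]\geq 0$.

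Passing to the limit along $r_{n}$ in the energy inequality, Grönwall's lemma forces $\mathbb{E}\int_{0}^{T}(\Delta Y_{t})^{2}dt=0$; hence $\tilde{Y}\equiv Y^{\infty}$ on $[0,T)$. The forward equation then gives $\tilde{X}\equiv X^{\infty}$, and subtracting the backward equations together with martingale representation yields $\tilde{Z}\equiv Z^{\infty}$ and $\tilde{Z}^{0}\equiv Z^{0,\infty}$, completing the uniqueness claim.
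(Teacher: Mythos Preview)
Your overall strategy—It\^o's formula on $\Delta X\,\Delta Y$ together with the sign conditions in Assumption~(H)(iv)—is the same as the paper's, and the handling of the $l$- and $G$-terms via conditioning is correct. There are, however, two problems.

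The boundary-term argument contains inaccurate claims. Lemma~\ref{uniformly bounded} does \emph{not} bound $Y_r^{\infty}$ uniformly in $r$: the upper bound there is $\kappa_r\sim (T-r)^{-1}$, so $Y_r^{\infty}$ may blow up as $r\to T$. Similarly, $\tilde Y\in L^2_{\mathbb F}([0,T))$ does not produce a subsequence along which $\mathbb E\tilde Y_{r_n}^2$ stays bounded; at best it gives $(T-r_n)\,\mathbb E\tilde Y_{r_n}^2\to 0$. These slips are repairable (indeed $\mathbb E(\Delta X_r)^2=o(T-r)$ from $\Delta X_r=-\int_r^T\Delta b\,ds$, which combined with $(T-r_n)\mathbb E(Y'_{r_n})^2\to 0$ for $Y'=\tilde Y,Y^{\infty}$ suffices). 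The paper takes a different, more structural route: from the forward equation and nonnegativity it derives $X_t\geq\frac{\delta^2}{K}\int_t^r Y_s\,ds$, hence $(T-t)Y_t\lesssim X_t+(T-t)\cdot(\text{integrable term})$, and then uses $\mathbb E[X_t X_t^{\infty}]=o(T-t)$ to show all four products $\mathbb E[Y_tX_t^{\infty}]$, etc., vanish at $T$.

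The genuine gap is the final ``Gr\"onwall'' step. After bounding the $f$-cross-term by Young's inequality and using the forward Gr\"onwall estimate $\mathbb E(\Delta X_t)^2\leq C_T\int_0^t\mathbb E(\Delta Y_s)^2\,ds$, passing to the limit along your subsequence only yields
\[
c_1\int_0^{T}\mathbb E(\Delta Y_t)^2\,dt\;\leq\;c_2\int_0^{T}(T-s)\,\mathbb E(\Delta Y_s)^2\,ds\;\leq\;c_2T\int_0^{T}\mathbb E(\Delta Y_t)^2\,dt,
\]
which is vacuous unless $c_1>c_2T$; but $c_2$ carries the large constant $C_\varepsilon=K^2/(2c)$ together with $C_T$, so there is no reason this holds. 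The paper avoids this obstruction by a two-step argument: it first applies the product estimate to the \emph{conditional} processes $\bar X:=\mathbb E[X\,|\,\mathcal F^{W^0}]$, $\bar Y:=\mathbb E[Y\,|\,\mathcal F^{W^0}]$. In that equation $f$ acts on $\bar X$ itself, so the $f$-term can be absorbed \emph{exactly} by multiplying $\Delta\bar X$ with an integrating factor, rather than estimated via Young. This gives $\bar X\equiv\bar X^{\infty}$ and $\bar Y\equiv\bar Y^{\infty}$ directly. In the second step, with the conditional expectations already equal, every mean-field term in the full It\^o formula for $(\Delta X)(\Delta Y)$ vanishes identically, the integrand reduces to $-Q_t(\Delta X_t)^2-B_t^2R_t^{-1}(\Delta Y_t)^2\leq 0$, and uniqueness follows without any Gr\"onwall argument. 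You should either adopt this two-step route or find another way to handle the $f$-cross-term that does not introduce an uncontrolled constant.
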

\begin{proof}
Let $(X,Y,Z,Z^{0})$ be another solution of \eqref{conditional constrainted mfFBSDE}. Our objective is to prove that $(X,Y,Z,Z^{0})$ coincides with $(X^{\infty},Y^{\infty},Z^{\infty},Z^{0,\infty})$. To this end, we first show 
\begin{equation}\label{1}
    \lim_{t\rightarrow T} \mathbb{E}\left[(Y^{\infty}_{t}-Y_{t})(X^{\infty}_{t}-X_{t})\right] = 0,
\end{equation}
and
\begin{equation}\label{2}
    \lim_{t\rightarrow T} \mathbb{E}\left[(\mathbb{E}[Y^{\infty}_{t}|\mathcal{F}_{t}^{W^{0}}]-\mathbb{E}[Y_{t}|\mathcal{F}_{t}^{W^{0}}])(\mathbb{E}[X^{\infty}_{t}|\mathcal{F}_{t}^{W^{0}}]-\mathbb{E}[X_{t}|\mathcal{F}_{t}^{W^{0}}])\right] = 0.
\end{equation}
For any $t\in[0,T)$, we have
\begin{equation*}
\begin{aligned}
     X_{t} &\geq  X_{t}-X_{r}\\
     &= \int_{t}^{r}\left[-A_{s}X_{s}-f(\mathbb{E}[s,Y_{s}|\mathcal{F}_s^{W^{0}}])+B_s^2 R_s^{-1}Y_{s} +B_s h\left(s, \rho(s,-R_s^{-1} B_s \mathbb{E}[Y_{s} |\mathcal{F}_s^{W^{0}}])\right)\right.\\
     &\quad\left.-b\left(s, \rho(s,-R_s^{-1} B_s \mathbb{E}[Y_{s}|\mathcal{F}_s^{W^{0}}])\right)\right]ds\\
     &\geq  \int_{t}^{r} \frac{\delta^{2}}{K}Y_{s}ds.
\end{aligned}
\end{equation*}
Consequently, with the product rule we obtain for all $t \in\left[0, T\right)$,
$$
\begin{aligned}
(T-t) Y_t &=  (T-r)Y_r+\int_t^r Y_s d s  +\int_t^r(T-s) \left[A_sY_s+Q_sX_s+Q_s l\left(s, \mathbb{E}[X_{s} |\mathcal{F}_s^{W^{0}}]\right)\right] d s \\
&\quad -\int_t^r(T-s) Z_s d W_s-\int_t^r(T-s) Z^{0}_s d W_s^{0} \\
&\leq  (T-r) Y_r+\frac{K}{\delta^{2}}X_{t} +\int_t^r(T-s) \left[A_sY_s+Q_sX_s+Q_s l\left(s, \mathbb{E}[X_{s} |\mathcal{F}_s^{W^{0}}]\right)\right]  d s\\
&\quad  -\int_t^r(T-s) Z_s d W_s-\int_t^r(T-s) Z^{0}_s d W_s^{0} .
\end{aligned}
$$
Taking conditional expectations on both side, we obtain,
$$
\begin{aligned}
(T-t) Y_t \leq & \mathbb{E}\left[(T-r) Y_r | \mathcal{F}_t\right]+\frac{K}{\delta^{2}} X_{t} \\
& +\mathbb{E}\left[\int_t^r(T-s) \left[A_sY_s+Q_sX_s+Q_s l\left(s, \mathbb{E}[X_{s} |\mathcal{F}_s^{W^{0}}]\right)\right]d s | \mathcal{F}_t\right] .
\end{aligned}
$$
The nonnegtivity of $Q_sX_s+Q_s l\left(s, \mathbb{E}[X_{s} |\mathcal{F}_s^{W^{0}}]\right)$ implies that $e^{A_{s}}Y_{s}$ is a nonnegative supermartingale on $[0,T)$, and hence $0\leq \mathbb{E}[e^{A_{r}}Y_{r}|\mathcal{F}_{t}]\leq e^{A_{t}}Y_{t}$.
 Consequently, $\lim _{r \rightarrow T} \mathbb{E}\left[(T-r) Y_r | \mathcal{F}_t\right]=0$. Thus, by letting $r \uparrow T$, we obtain for all $t \in\left[0, T\right)$,
 $$
\begin{aligned}
(T-t) Y_t  & \leq \frac{K}{\delta^{2}} X_{t} +\mathbb{E}\left[\int_t^r(T-s) \left[A_sY_s+Q_sX_s+Q_s l\left(s, \mathbb{E}[X_{s} |\mathcal{F}_s^{W^{0}}]\right)\right]d s | \mathcal{F}_t\right]\\
&\leq \frac{K}{\delta^{2}} X_{t} +(T-t)\mathbb{E}\left[\int_t^T \left[A_sY_s+Q_sX_s+Q_s l\left(s, \mathbb{E}[X_{s} |\mathcal{F}_s^{W^{0}}]\right)\right]d s | \mathcal{F}_t\right].
\end{aligned}
$$
Next, it follows from H{\"o}lder's inequality that for all $t \in\left[0, T\right)$,
\begin{equation*}
    \begin{aligned}
\mathbb{E}[X_t^{\infty} X_t]& \leq \sqrt{\mathbb{E}[\left(X_t^{\infty}\right)^2] \mathbb{E}\left[X_t^2\right]}\\
&\leq(T-t) \sqrt{\mathbb{E}\left[\int_t^T\left(B^{\infty}(r,X^{\infty}_{r},Y^{\infty}_{r})\right)^2 d r\right] \mathbb{E}\left[\int_t^T\left(B(r,X_{r},Y_{r})\right)^2 d r\right]},  \end{aligned}
\end{equation*}
where we denote the drift coefficients of $X^{\infty}
$ and $X$ as $B^{\infty}$ and $B$, respectively.
Moreover, $X^{\infty},X,Y^{\infty},Y\in L^{2}_{\mathbb{F}}([0,T);\mathbb{R})$, $B^{\infty}(t,0,0)=0$, $B(t,0,0)=0$ imply that
 $B, B^{\infty} \in L^2_{\mathbb{F}}\left(\left(0, T\right); \mathbb{R}\right)$. Then
$$
\lim _{t \rightarrow T} \frac{1}{T-t} \mathbb{E}\left[X_t^{\infty} X_t\right]=0 .
$$
Then, we have
$$
\begin{aligned}
\mathbb{E}\left[Y_t X_t^{\infty}\right] & \leq \frac{K}{\delta^{2}}\mathbb{E}\left[ \frac{X_t X_t^{\infty}}{T-t}\right]+\mathbb{E}\left[X_t^{\infty} \mathbb{E}\left[\int_t^T \left[A_sY_s+Q_sX_s+Q_s l\left(s, \mathbb{E}[X_{s} |\mathcal{F}_s^{W^{0}}]\right)\right] d s | \mathcal{F}_t\right]\right] .
\end{aligned}
$$
Therefore, we obtain $\lim _{t \rightarrow T} \mathbb{E}\left[Y_t X_t^{\infty}\right]=0$.
Similarly, one can show $\lim _{t \rightarrow T}\mathbb{E}\left[Y_t X_t\right]=\lim _{t \rightarrow T} \mathbb{E}\left[Y_t^{\infty} X_t^{\infty}\right]=\lim _{t \rightarrow T}\mathbb{E}\left[Y_t^{\infty} X_t\right]=0$, which entails \eqref{1}. And we can obtain \eqref{2} similarly.\\ 
Now, applying It\^{o}'s formula to $(\mathbb{E}[Y^{\infty}_{t}|\mathcal{F}_{t}^{W^{0}}]-\mathbb{E}[Y_{t}|\mathcal{F}_{t}^{W^{0}}])(e^{K(\omega)t}(\mathbb{E}[X^{\infty}_{t}|\mathcal{F}_{t}^{W^{0}}]-\mathbb{E}[X_{t}|\mathcal{F}_{t}^{W^{0}}]))$, 
we get
\begin{equation*}
\begin{aligned}
& \mathbb{E}[Y^{\infty}_{t}|\mathcal{F}_{t}^{W^{0}}]-\mathbb{E}[Y_{t}|\mathcal{F}_{t}^{W^{0}}])(e^{K(\omega)t}(\mathbb{E}[X^{\infty}_{t}|\mathcal{F}_{t}^{W^{0}}]-\mathbb{E}[X_{t}|\mathcal{F}_{t}^{W^{0}}])) \\
 & = \int_{0}^{t}-\Big(A_s\mathbb{E}[Y_s^{\infty}|\mathcal{F}^{W^{0}}_{s}]+Q_s\mathbb{E}[X_s^{\infty}|\mathcal{F}^{W^{0}}_{s}]+Q_s l(s, \mathbb{E}[X_{s}^{\infty} |\mathcal{F}_s^{W^{0}}])\\
 &\quad-A_s\mathbb{E}[Y_s|\mathcal{F}^{W^{0}}_{s}]-Q_s\mathbb{E}[X_s^{\infty}|\mathcal{F}^{W^{0}}_{s}]-Q_s l(s, \mathbb{E}[X_{s} |\mathcal{F}_s^{W^{0}}]))\Big)\Big(e^{K(\omega)s}(\mathbb{E}[X_s^{\infty}|\mathcal{F}^{W^{0}}_{s}]-\mathbb{E}[X_s|\mathcal{F}^{W^{0}}_{s}])\Big)\\
 &\quad +e^{K(\omega)s}\Bigg(A_s \mathbb{E}[X_s^{\infty}|\mathcal{F}^{W^{0}}_{s}]-B_s^2 R_s^{-1}\mathbb{E}[Y_s^{\infty}|\mathcal{F}^{W^{0}}_{s}]-B_s h\left(s, \rho(s,-R_s^{-1} B_s \mathbb{E}[Y_{s}^{\infty} |\mathcal{F}_s^{W^{0}}])\right) \\
& \quad +b\left(s, \rho(s,-R_s^{-1} B_s \mathbb{E}[Y_{s}^{\infty}|\mathcal{F}_s^{W^{0}}])\right)-A_{s}\mathbb{E}[X_s|\mathcal{F}^{W^{0}}_{s}]+B_s^2 R_s^{-1}\mathbb{E}[Y_s|\mathcal{F}^{W^{0}}_{s}] \\
     &\quad+B_s h\left(s, \rho(s,-R_s^{-1} B_s \mathbb{E}[Y_{s} |\mathcal{F}_s^{W^{0}}])\right)-b\left(s, \rho(s,-R_s^{-1} B_s \mathbb{E}[Y_{s}|\mathcal{F}_s^{W^{0}}])\right)\\
     &\quad+\frac{f(s,\mathbb{E}[X_{s}^{\infty}|\mathcal{F}^{W^{0}}_{s}])-f(s,\mathbb{E}[X_{s}|\mathcal{F}^{W^{0}}_{s}])}{\mathbb{E}[X_s^{\infty}|\mathcal{F}^{W^{0}}_{s}]-\mathbb{E}[X_s|\mathcal{F}^{W^{0}}_{s}]}(\mathbb{E}[X_s^{\infty}|\mathcal{F}^{W^{0}}_{s}]-\mathbb{E}[X_s|\mathcal{F}^{W^{0}}_{s}])\\
     &\quad+K(\omega)(\mathbb{E}[X_s^{\infty}|\mathcal{F}^{W^{0}}_{s}]-\mathbb{E}[X_s|\mathcal{F}^{W^{0}}_{s}])\Bigg)\Big(\mathbb{E}[Y_s^{\infty}|\mathcal{F}^{W^{0}}_{s}]-\mathbb{E}[Y_s|\mathcal{F}^{W^{0}}_{s}]\Big)ds.
    \end{aligned}
\end{equation*}
Let 
$$
K(\omega)=\frac{f(s,\mathbb{E}[X_{s}^{\infty}|\mathcal{F}^{W^{0}}_{s}])-f(s,\mathbb{E}[X_{s}|\mathcal{F}^{W^{0}}_{s}])}{\mathbb{E}[X_s^{\infty}|\mathcal{F}^{W^{0}}_{s}]-\mathbb{E}[X_s|\mathcal{F}^{W^{0}}_{s}]}.\footnote{we will take the convention that
	$
	\frac{\varphi(x)-\varphi(x)}{x-x}:=\lim _{\tilde{x} \rightarrow x} \frac{\varphi(\tilde{x})-\varphi(x)}{\tilde{x}-x}$.
}
$$
Then, we have
\begin{equation*}
\begin{aligned}
& \mathbb{E}[Y^{\infty}_{t}|\mathcal{F}_{t}^{W^{0}}]-\mathbb{E}[Y_{t}|\mathcal{F}_{t}^{W^{0}}])(e^{K(\omega)t}(\mathbb{E}[X^{\infty}_{t}|\mathcal{F}_{t}^{W^{0}}]-\mathbb{E}[X_{t}|\mathcal{F}_{t}^{W^{0}}])) \\
	 & =\int_{0}^{t}-Q_s\left(\mathbb{E}[X_s^{\infty}|\mathcal{F}^{W^{0}}_{s}]-\mathbb{E}[X_s|\mathcal{F}^{W^{0}}_{s}]\right)^{2}-B_s^2 R_s^{-1}\Big(\mathbb{E}[Y_s^{\infty}|\mathcal{F}^{W^{0}}_{s}]-\mathbb{E}[Y_s|\mathcal{F}^{W^{0}}_{s}]\Big)^{2}\\
	&\quad-Q_s \left(l(s, \mathbb{E}[X_{s}^{\infty} |\mathcal{F}_s^{W^{0}}])-l(s, \mathbb{E}[X_{s} |\mathcal{F}_s^{W^{0}}]))\right)\Big(e^{K(\omega)s}(\mathbb{E}[X_s^{\infty}|\mathcal{F}^{W^{0}}_{s}]-\mathbb{E}[X_s|\mathcal{F}^{W^{0}}_{s}])\Big)\\
	&\quad \Big[-B_sh\left(s, \rho(s,-R_s^{-1} B_s \mathbb{E}[Y_{s}^{\infty} |\mathcal{F}_s^{W^{0}}])\right)+b\left(s, \rho(s,-R_s^{-1} B_s \mathbb{E}[Y_{s}^{\infty}|\mathcal{F}_s^{W^{0}}])\right) \\
	& \quad +B_{s}h\left(s, \rho(s,-R_s^{-1} B_s \mathbb{E}[Y_{s} |\mathcal{F}_s^{W^{0}}])\right)-b\left(s, \rho(s,-R_s^{-1} B_s \mathbb{E}[Y_{s}|\mathcal{F}_s^{W^{0}}])\right)\Big]\Big(e^{K(\omega)s}(\mathbb{E}[Y_s^{\infty}|\mathcal{F}^{W^{0}}_{s}]-\mathbb{E}[Y_s|\mathcal{F}^{W^{0}}_{s}])\Big)ds\\
	&\leq \int_{0}^{t} -C\left(\left|\mathbb{E}[X_{s}^{\infty}|\mathcal{F}_{s}^{W^{0}}]-\mathbb{E}[X_{s}|\mathcal{F}_{s}^{W^{0}}]\right|^{2}+\left|\mathbb{E}[Y_{s}^{\infty}|\mathcal{F}_{s}^{W^{0}}]-\mathbb{E}[Y_{s}|\mathcal{F}_{s}^{W^{0}}]\right|^{2}\right)ds.
\end{aligned}
\end{equation*}
Let $t\rightarrow T$, we obtain
$$
0 \leq \int_{0}^{T} -C\left(\left|\mathbb{E}[X_{s}^{\infty}|\mathcal{F}_{s}^{W^{0}}]-\mathbb{E}[X_{s}|\mathcal{F}_{s}^{W^{0}}]\right|^{2}+\left|\mathbb{E}[Y_{s}^{\infty}|\mathcal{F}_{s}^{W^{0}}]-\mathbb{E}[Y_{s}|\mathcal{F}_{s}^{W^{0}}]\right|^{2}\right)ds,
$$
which implies that
\begin{equation}\label{conditional exp}
    \mathbb{E}[X_{s}^{\infty}|\mathcal{F}_{s}^{W^{0}}]=\mathbb{E}[X_{s}|\mathcal{F}_{s}^{W^{0}}],\quad \mathbb{E}[Y_{s}^{\infty}|\mathcal{F}_{s}^{W^{0}}]=\mathbb{E}[Y_{s}|\mathcal{F}_{s}^{W^{0}}].
\end{equation} 
Then, applying It\^{o}'s formula to $(Y_{t}^{\infty}-Y_{t})(X_{t}^{\infty}-X_{t})$ and using \eqref{conditional exp}, we have
\begin{equation*}
\begin{aligned}
    (Y_{t}^{\infty}-Y_{t})(X_{t}^{\infty}-X_{t})& = \int_{0}^{t} \left[-Q_{s}(X_{s}^{\infty}-X_{s})^{2}-B^{2}_{s}R^{-1}_{s}(Y_{s}^{\infty}-Y_{s})^{2}\right]ds\\
    & \leq \int_{0}^{t}-C\left[(X_{s}^{\infty}-X_{s})^{2}+(Y_{s}^{\infty}-Y_{s})^{2}\right]ds.
\end{aligned}
\end{equation*}
Let $t\rightarrow T$, we can get $Y^{\infty}=Y$ and $X^{\infty}=X$, and hence also $Z^{\infty}=Z$ and $Z^{0,\infty}=Z^{0}$.
\end{proof}
\bibliographystyle{siam}
\bibliography{bib}
\end{document}